\documentclass[12pt]{amsart}
\usepackage{enumerate,amssymb}
\usepackage{amsmath}
\usepackage{comment}
\usepackage{hyperref}

\setlength{\parindent}{0.25in}
\setlength{\parskip}{0.2cm}
\usepackage[top=1in, left=1in, right=1in, bottom=1in]{geometry}

\newtheorem{theorem}{Theorem}[section]
\newtheorem{lemma}[theorem]{Lemma}

\theoremstyle{definition}
\newtheorem{definition}[theorem]{Definition}
\theoremstyle{remark}
\newtheorem{remark}[theorem]{Remark}
\numberwithin{equation}{section}
%\numberwithin{section}{part}

\newcommand{\R}{{\mathbb R}}

\title[Boundary Harnack]{The Inhomogeneous Boundary Harnack Principle for Fully Nonlinear and $p$-Laplace equations}

\author{Mark Allen}
\address[Mark Allen]{Department of Mathematics, Brigham Young University, Provo,  UT}
\email{allen@mathematics.byu.edu}

\author{Dennis Kriventsov}
\address[Dennis Kriventsov]{Department of Mathematics, Rutgers University,  Piscataway, NJ}
\email{dnk34@math.rutgers.edu}

\author{Henrik Shahgholian}
\address[Henrik Shahgholian]{Department of Mathematics, KTH  Royal Institute of Technology, Stockholm, Sweden}
\email{henriksh@math.kth.se}

\date{October 22, 2020}

\begin{document}

\begin{abstract}
We prove a boundary Harnack principle in Lipschitz domains with small constant for fully nonlinear and $p$-Laplace type equations with a right hand side, as well as for the Laplace equation on nontangentially accessible domains under extra conditions. The approach is completely new and gives a systematic approach for proving similar results for a  variety of equations and geometries.
\end{abstract}

\maketitle

%%%%%%%%%%%%%%
%%%%%%%%%%%%%%%
\section{Introduction}
%%%%%%%%%%%%%%%
%%%%%%%%%%%%%%%%

This work is intended as a sequel to \cite{as19} by the first and third authors, where a boundary Harnack principle (BHP) was established for the Laplace equation with right hand side in Lipschitz domains (with small Lipschitz norm). Here we extend the result to the case of fully non-linear as well as $p$-Laplace equations. The novel and very simple approach introduced here also allows us to consider nontangentially accessible (NTA) domains when there is an assumed lower bound on the growth of the solution from the boundary.

In lay terms, the  main result in \cite{as19}  states that  (up to a multiplicative  constant) a positive harmonic function can dominate a superharmonic function close to a boundary point $x^0$ of  a domain\footnote{Throughout the paper we assume all domains are in $\R^n$ with $n \geq 2$.}
 $D \subset \R^n $ ($n \geq 2$), so long as both functions have zero boundary values in a small neighborhood of $x^0$. 
 See below Theorem \ref{t:flmain} for an exact formulation of the general case in this paper.

The  BHP with right hand side can be used to prove the regularity 
of free boundaries,  for the obstacle problem (see \cite{as19}), and the thin obstacle problem (see \cite{RT}).
Therefore  further study and generalization of the BHP with right hand side  should be emphasised to allow   applications to  more complicated free boundary problems. This work aims to make progress in this direction.

The reader may find it useful to read the longer introduction and applications mentioned in \cite{as19}, which we have chosen not to repeat here. Since then there has been some further research on this topic, including \cite{Sir}, as well as \cite{RT} which we learned about in the final stages of preparation of this work. The approach taken here is rather different and allows treatment of very general configurations (see  Theorem \ref{t:meta}  in Section \ref{sec:meta}); however, our results do not entirely overlap with the above mentioned references.

Our main results in this paper are the following theorems:
\begin{theorem}    \label{t:flmain}
 Let $\Omega$ be a Lipschitz domain with Lipschitz constant $L \leq \eta$ and assume $0 \in \partial \Omega$. Let $u,v \geq 0$ with $u=v=0$ on $\partial \Omega \cap B_1$ and assume  $u(e_n/2) = v(e_n/2) =1$. Assume the fully nonlinear operator $F$ satisfies the structural conditions \eqref{e:struc1} and \eqref{e:construc2}. There exist constants 
$C, \epsilon, \eta_0>0$ (depending on dimension and the ellipticity constants $\lambda, \Lambda$ of $F$) such that if 
 \[
  -1 \leq F(D^2 u, \nabla u), F(D^2 v, \nabla v) \leq \epsilon, 
 \]
 then  
  \[
   \frac{v}{u} \leq C \text{ in } \Omega \cap B_{1/2}. 
  \]
\end{theorem}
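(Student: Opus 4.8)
The plan is to reduce the inequality to a scale-invariant comparison of $u$ and $v$ at corkscrew points, valid at every dyadic scale down to the boundary point $0$, and to establish that comparison by downward induction on the scale. The mechanism is that, because the Lipschitz constant is small, the boundary growth exponent of solutions of the Pucci extremal equations stays strictly below $2$; therefore the prescribed right-hand side rescales to something arbitrarily small at small scales, and each step of the induction is, in the limit, nothing but the boundary Harnack principle for the homogeneous equation.

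I would first dispose of the interior. By the structural conditions \eqref{e:struc1} and \eqref{e:construc2}, $u$ and $v$ are simultaneously sub- and supersolutions of $\mathcal{M}^{\pm}(D^2\cdot)\pm\Lambda|\nabla\cdot|=g$ with $\|g\|_\infty\le 1$, so the interior Harnack inequality and interior $C^{1,\alpha}$ bounds hold along chains of balls at definite distance from $\partial\Omega$. Since $L\le\eta$ is small, the corkscrew points $A_r(Q):=Q+re_n$ ($Q\in\partial\Omega\cap B_{1/2}$) lie in $\Omega$ with $\operatorname{dist}(A_r(Q),\partial\Omega)\ge cr$, and any point of $\Omega\cap B_{3/4}$ at distance $\ge\delta$ from $\partial\Omega$ is joined to $e_n/2$ by a Harnack chain of controlled length; with a Carleson-type bound $\sup_{\Omega\cap B_{3/4}}v\le C$ (obtained by comparison with a supersolution barrier) this makes $v/u$ bounded away from $\partial\Omega$. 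The theorem then reduces to the claim that $v(A_r(Q))\le C\,u(A_r(Q))$ for all $Q\in\partial\Omega\cap B_{1/2}$ and all $r\le r_0$, with $C$ uniform: rescaling at scale $r$ and comparing $u$ and $v$ with barriers built from the cones $\{x_n>\pm\eta|x'|\}$ sandwiching $\Omega$ near $Q$ converts the corkscrew comparison into matching two-sided boundary decay for $u$ and $v$ at scale $r$, and hence into $v/u\le C$ on all of $\Omega\cap B_{1/2}$.

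To prove the displayed claim by downward induction on $r=2^{-k}$, I would pass to the normalized functions $u_r(x)=u(Q+rx)/u(A_r(Q))$ and $v_r(x)=v(Q+rx)/v(A_r(Q))$. These are nonnegative, equal $1$ at $A_1$, vanish on a Lipschitz graph through the origin with constant $\le\eta$, and solve fully nonlinear equations with the same ellipticity constants and right-hand sides bounded by $r^{2}/u(A_r(Q))$. A lower barrier of the form $|x-Q|^{\gamma}$ times a cone eigenfunction, with $\gamma=\gamma(n,\lambda,\Lambda,\eta)\to 1$ as $\eta\to0$ — so that $\gamma<2$ once $\eta\le\eta_0$ — shows $u(A_r(Q))\gtrsim r^{\gamma}$, whence those right-hand sides are $\lesssim r^{2-\gamma}\to0$. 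By the interior and Carleson estimates the families $\{u_r\}$ and $\{v_r\}$ are precompact; any subsequential limit solves a homogeneous uniformly elliptic equation in a small-Lipschitz-constant domain, vanishes on the boundary, and is positive inside, so the homogeneous boundary Harnack principle — classical in Lipschitz domains, and in any case recoverable by running the same scheme with vanishing right-hand side — yields $v_\infty\le C\,u_\infty$; unwinding the scaling produces the claim at scale $r/2$.

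The delicate point, which I expect to be the real obstacle, is that the constant $C$ produced by this inductive step must not degrade as $k\to\infty$. One has to make the dependence of $C$ on the size of the rescaled right-hand side quantitative and verify that the resulting increments to the ratios $v(A_{2^{-k}}(Q))/u(A_{2^{-k}}(Q))$ are summable in $k$; exponential accumulation here is exactly what a naive Carleson-plus-Harnack argument would produce and what must be avoided. This summability is what forces $\gamma<2$ — hence $\eta\le\eta_0$ — together with a smallness threshold on $\epsilon$, and it is the technical heart of the argument. The $p$-Laplace statement and the NTA results then follow by verifying the same package of ingredients — interior estimates, the Carleson estimate, the cone barriers, and the homogeneous boundary Harnack principle — which is the abstract content of the meta-theorem of Section~\ref{sec:meta}.
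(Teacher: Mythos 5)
Your overall architecture matches the paper's: a growth lower bound $u\gtrsim d^{\beta}$ with $\beta<2$ obtained from barriers under the smallness hypotheses on $L$ and on the upper bound $\epsilon$ for $F(D^2u,\nabla u)$; the observation that this makes the rescaled inhomogeneity negligible at small scales; and an iteration of the homogeneous boundary Harnack principle over scales, with the whole scheme abstracted as in Theorem~\ref{t:meta}. The reduction to corkscrew points, the dyadic (rather than superalgebraic) choice of scales, and the use of cone barriers are all acceptable variants of what the paper does.

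However, there is a genuine gap exactly at the point you yourself flag as ``the technical heart'': your inductive step, as written, does not avoid the exponential accumulation you warn against. You pass to rescalings $u_r, v_r$, invoke compactness, and conclude from the homogeneous BHP that $v_\infty\le C\,u_\infty$. That is the \emph{bounded-ratio} form of the BHP, and feeding it into a downward induction gives $M_k\le C\,M_{k-1}$ with a fixed $C>1$, hence $M_k\le C^k M_0\to\infty$; a compactness argument also gives no rate, so you cannot even quantify how close the step is to the homogeneous one. What actually makes the errors summable in the paper is the \emph{H\"older-continuity-of-quotients} form of the homogeneous BHP (property (P8)): if $h_1,h_2$ are homogeneous solutions on a ball of radius $s$ about a boundary point, then $\frac{h_1(x)}{h_2(x)}\le\bigl(1+C\,|x-y|^\alpha/s^\alpha\bigr)\frac{h_1(y)}{h_2(y)}$. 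One approximates $u_i$ by homogeneous solutions $v_i$ on a ball of radius $s$ chosen \emph{strictly between} the two relevant scales, $r_{k-1}\ll s\ll r_k^{\beta/2}$; the approximation error $Cs^2$ divided by the lower bound $u_i\ge C_4 r_k^{\beta}$ gives a multiplicative factor $1+Cs^2/r_k^{\beta}$, the BHP applied between a point of the $k$-th layer and an \emph{anchor point in the $(k-1)$-st layer} (where the ratio is already bounded by $M_{k-1}$) gives a factor $1+C(r_{k-1}/s)^{\alpha}$, and both corrections are geometrically summable precisely because $\beta<2$ leaves room for such an $s$. Without this two-scale bookkeeping and the anchoring at the previous layer, the induction does not close; supplying it is not a technicality to be deferred but the entire content of the proof.
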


For the $p$-Laplacian we obtain a similar result for supersolutions:
\begin{theorem}    \label{t:flpmain}
 Let $\Omega$ be a Lipschitz domain with Lipschitz constant $L$ and assume $0 \in \partial \Omega$. Let $u,v \geq 0$ with $u=v=0$ on $\partial \Omega \cap B_1$ and assume  $u(e_n/2) = v(e_n/2) =1$. There exist constants 
$C, \eta>0$  such that if $L \leq \eta$ and 
 \[
  -1 \leq \Delta_p v, \Delta_p u  \leq 0, 
 \]
 then  
  \[
   \frac{v}{u} \leq C \text{ in } \Omega \cap B_{1/2}. 
  \]
\end{theorem}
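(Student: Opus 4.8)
\emph{Proof strategy.} By the symmetry of the hypotheses in $u$ and $v$ it suffices to prove $v\le Cu$ in $\Omega\cap B_{1/2}$. The first thing I would observe is that the class of triples $(\Omega,u,v)$ satisfying the hypotheses is essentially closed under the rescaling
\[
(\Omega, u, v)\ \longmapsto\ \Big(\tfrac{1}{\rho}(\Omega - z),\ \tfrac{1}{\mu}\,u(z+\rho\,\cdot\,),\ \tfrac{1}{\nu}\,v(z+\rho\,\cdot\,)\Big),\qquad z\in\partial\Omega\cap B_{1/2},\ \ \rho\le\tfrac14 ,
\]
with $\mu\sim\sup_{B_\rho(z)\cap\Omega}u$, $\nu\sim\sup_{B_\rho(z)\cap\Omega}v$: the Lipschitz constant is preserved, the normalizations are restored, and the inhomogeneity of the rescaled $\tilde u$ lies in $[-\rho^{p}\mu^{-(p-1)},0]$. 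Together with a covering of $\partial\Omega\cap B_{1/2}$ and interior Harnack chains on the set $\{x:d(x)\gtrsim\text{scale}\}$ (with $d(x)=\operatorname{dist}(x,\partial\Omega)$), this reduces the theorem to a single local estimate $v\le Cu$ in a fixed small ball $\Omega\cap B_{\rho_0}$.

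Next I would record two standard inputs. First, the interior Harnack inequality with right-hand side for positive $p$-superharmonic functions with inhomogeneity in $[-1,0]$, chained from the normalization $u(e_n/2)=v(e_n/2)=1$, together with a Carleson-type bound, gives $u,v\le C$ in $\Omega\cap B_{3/4}$. Second, a non-degeneracy bound $u(x)\ge c\,d(x)^{\gamma}$ in $\Omega\cap B_{3/4}$, where $\gamma=\gamma(n,p,\eta)$ is the homogeneity exponent of the positive $p$-harmonic function vanishing on a Lipschitz cone of opening $\le\eta$ containing $\Omega$ near the relevant boundary point; this follows by comparing $u$ from below with its $p$-harmonic replacement (legitimate since $\Delta_p u\le0$) and the latter with an explicit homogeneous $p$-subsolution barrier. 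Since $\gamma\to1$ as $\eta\to0$ while $p/(p-1)>1$, I would then fix $\eta=\eta(n,p)$ so small that $\gamma<p/(p-1)$; this is precisely where the smallness of the Lipschitz constant is used.

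The heart of the argument is a perturbative comparison at each dyadic scale $r=2^{-k}$ (after the reduction above, centred near the relevant boundary point). Let $\bar u,\bar v$ be the $p$-harmonic replacements of $u,v$ in $\Omega\cap B_r$. The comparison principle gives $0\le u-\bar u$ and $0\le v-\bar v$, and — comparing with the explicit solutions of $\Delta_p w=\mp1$, which scale like $r^{p/(p-1)}$ since $|x|^{p/(p-1)}$ has constant $p$-Laplacian — one obtains $\|u-\bar u\|_{L^\infty(\Omega\cap B_r)}+\|v-\bar v\|_{L^\infty(\Omega\cap B_r)}\le Cr^{p/(p-1)}$. Now I would invoke the boundary Harnack principle for the \emph{homogeneous} $p$-Laplace equation in Lipschitz domains (Lewis--Nystr\"om; or, exploiting $L\le\eta$, a direct barrier argument), in the quantitative form that $\bar v/\bar u$ is bounded and continuous up to $\partial\Omega\cap B_{r/2}$ with $\sup_{\Omega\cap B_{r/2}}\bar v/\bar u\le C_1\,\bar v(A_r)/\bar u(A_r)$ for a corkscrew point $A_r$ of $B_r$. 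Using the non-degeneracy to absorb the errors ($\bar u(A_r)\ge\tfrac12 u(A_r)\ge c\,r^{\gamma}\gg r^{p/(p-1)}$) yields, for $x\in\Omega\cap B_{r/2}$,
\[
v(x)\ \le\ \bar v(x)+Cr^{p/(p-1)}\ \le\ C_1\frac{\bar v(A_r)}{\bar u(A_r)}\,u(x)+Cr^{p/(p-1)}\ \le\ C_1'\Big(\sup_{\Omega\cap B_r}\frac{v}{u}\Big)u(x)+Cr^{p/(p-1)} .
\]
Dividing by $u(x)\ge c\,d(x)^{\gamma}$ and iterating over scales — re-centring at the boundary point nearest $x$ when $x$ is close to $\partial\Omega$, so that the gain $r^{p/(p-1)-\gamma}\to0$ dominates the error — propagates the bound on $\sup_{\Omega\cap B_r}v/u$ down the scales with a summable correction, the homogeneous BHP supplying the contraction that keeps the running constant bounded. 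Equivalently, one may argue by compactness: a blow-up sequence violating the bound converges, after the rescaling of the first paragraph (where the inhomogeneity tends to $0$ by non-degeneracy), to positive $p$-harmonic functions vanishing on a Lipschitz cone with unbounded ratio, contradicting the homogeneous BHP.

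I expect the main obstacle to be exactly the interplay between the right-hand side and the boundary: the $r^{p/(p-1)}$ error produced by $p$-harmonic replacement need not vanish on $\partial\Omega$ at the same rate as $u$, so $(\text{error})/u$ threatens to blow up near $\partial\Omega$ at every scale. Controlling this is what forces $\gamma<p/(p-1)$, hence the small Lipschitz constant, and making it rigorous needs the sharp non-degeneracy $u\gtrsim d^{\gamma}$ (with $\gamma$ depending continuously on the cone opening) together with a robust $\|u-\bar u\|_{L^\infty}\le Cr^{p/(p-1)}$ estimate via the De Giorgi--Nash--Moser machinery for the $p$-Laplacian. Once these are in place, Theorem \ref{t:flpmain} follows, and it is an instance of the general mechanism of Section \ref{sec:meta}.
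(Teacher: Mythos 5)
Your overall architecture matches the paper's: iterate over scales, replace $u,v$ by their $p$-harmonic replacements, apply the homogeneous boundary Harnack principle (Lewis--Nystr\"om) to the replacements, and use a nondegeneracy bound $u\gtrsim d^{\gamma}$ with $\gamma$ strictly below the approximation exponent (forced by $L\le\eta$) to absorb the replacement error. However, there is a genuine gap at the step you treat as routine: the estimate $\|u-\bar u\|_{L^\infty(\Omega\cap B_r)}\le Cr^{p/(p-1)}$ does \emph{not} follow ``by comparing with the explicit solutions of $\Delta_p w=\mp1$.'' The comparison principle gives the one-sided bound $u\ge\bar u$ (since $\Delta_p u\le 0=\Delta_p\bar u$) and $u\le f$ where $\Delta_p f=-1$ with the same boundary data, but to conclude you must bound $f-\bar u$, a difference of solutions to a \emph{nonlinear} equation with different right-hand sides. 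You cannot add a barrier $G$ to $\bar u$ and compare, because $\Delta_p(\bar u+G)\ne\Delta_p\bar u+\Delta_p G$. The difference satisfies only a linearized divergence-form equation $\partial_i(a_{ij}\partial_j(f-\bar u))$ whose coefficients $a_{ij}$ involve $|t\nabla f+(1-t)\nabla\bar u|^{p-2}$ and degenerate (or blow up) where the gradients vanish (or blow up) near $\partial\Omega$. Making this work is the main technical content of the paper's Section on the $p$-Laplacian: one first proves two-sided gradient bounds $|\nabla u|\approx u/d$ up to the boundary via a compactness/Liouville argument, shows $c\,d^{\alpha}\le a_{ij}\le C\,d^{-\alpha}$, and then runs a De Giorgi iteration adapted to these degenerate weights to get $|u-\bar u|\le Cr^{2-\alpha}$. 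Your proposal skips all of this, and the claimed exponent $p/(p-1)$ is not what comes out of the rigorous argument (the paper obtains $2-\alpha$ for any $\alpha>0$).

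A second, structural issue: as written, your scale iteration reads $M_{r/2}\le C_1' M_r+Cr^{p/(p-1)-\gamma}$ with a \emph{fixed} constant $C_1'>1$ coming from the homogeneous BHP applied in $B_r$ and evaluated at a corkscrew point $A_r$ of that same ball; iterating this gives $(C_1')^k\to\infty$, so the running constant is not bounded and the BHP does not ``supply a contraction.'' The paper avoids this by using the H\"older-quantitative form of the homogeneous BHP, $u_1(x)/u_2(x)\le(1+C(|x-y|/s)^{\alpha})\,u_1(y)/u_2(y)$, and decoupling the scale $s$ at which the BHP is applied from the distance scale $r_{k-1}$ of the comparison points (choosing $s=r_{k-1}^{\sigma}$, $r_k=r_{k-1}^{\gamma}$), so that each scale contributes a factor $1+C4^{-ck}$ whose infinite product converges. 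Your compactness alternative at the end could in principle replace the iteration, but it is too vague to assess (in particular, why the blow-up limit has ``unbounded ratio'' rather than a degenerating one needs the same quantitative nondegeneracy and approximation inputs). In short: right skeleton, but the two load-bearing quantitative lemmas -- the approximation estimate for the nonlinear operator and the $1+o(1)$ form of the per-scale factor -- are missing, and the first of these is precisely the hard part of the theorem.
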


The main ingredients in applying our method are 
\begin{description}
 \item[A] A boundary Harnack principle for solutions (to the homogeneous equation, with no right hand side).
 \item[B] An appropriate lower bound on the growth of $u,v$ from the boundary.
 \item[C] A comparison principle for sub and supersolutions.
 \item[D] Solvability of the Dirichlet problem (with continuous data).
\end{description}

Our theorems require a Lipschitz boundary for (A) and (B). However, for operators such as the Laplacian $\Delta$, one has a boundary Harnack principle for 
NTA domains \cite{JK}. Furthermore, for many free boundary problems a lower bound on the growth from the free boundary is often obtained directly (using competitors, barriers, or other techniques). In those cases (B) may be difficult or impossible to verify in general, but will be already available for the specific functions being considered. To handle this situation, one may apply our method on NTA domains and obtain the following conditional theorem, which appears useful in practice:

\begin{theorem}  \label{t:nta}
 Let $\Omega$ be an NTA domain with $0 \in \partial \Omega$, and assume that  for some $x^0 \in \Omega$ we have  $u(x^0)=v(x^0)=1$ and $u,v \geq 0$ with $u=v=0$ on $\partial \Omega \cap B_1$. If for some $0<\beta<2$ and some $c>0$
 one has 
 \[
  u(x), v(x) \geq c(\text{dist}(x,\partial \Omega))^{\beta}
 \]
 then 
 there exist a constant $C_0$ (depending on $\beta$, the NTA constants, and dist$(x^0, \partial B_1)$) such that if 
 \[
  -1 \leq \Delta u, \Delta v \leq 1, 
 \]
 then 
 \[
  \frac{v}{u} \leq C_0 \text{ in } \Omega \cap B_{1/2}. 
 \]
\end{theorem}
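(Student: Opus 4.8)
The plan is to run the general scheme described above, ingredients (A)--(D), in the case $F=\Delta$. For the Laplacian on an NTA domain all four inputs are classical: (A) is the Jerison--Kenig boundary Harnack principle of \cite{JK} together with the associated Carleson estimate; (C) is the usual comparison principle for sub/supersolutions of $\Delta$; and (D) holds because an NTA domain satisfies a uniform exterior corkscrew condition, hence is Wiener regular, so the Dirichlet problem for $\Delta$ with continuous data is solvable. Ingredient (B) is the hypothesis itself, with exponent $\beta\in(0,2)$. The normalization $u(x^0)=v(x^0)=1$ and the quantity $\operatorname{dist}(x^0,\partial B_1)$ will be used only to fix, via a Harnack chain in $\Omega$, the size of $u$ and $v$ at an interior reference point.

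I would first handle the region away from $\partial\Omega$. For a threshold $\delta_0>0$, at any $x\in\Omega\cap B_{1/2}$ with $\operatorname{dist}(x,\partial\Omega)\ge\delta_0$, the interior Harnack inequality for functions with $|\Delta\,\cdot\,|\le 1$, a Harnack chain joining $x$ to $x^0$, and the Carleson estimate (applied to the harmonic replacements of $u$ and $v$) together give $c\le u,v\le C$ there; this is the point at which the dependence of the final constant on $\operatorname{dist}(x^0,\partial B_1)$ and the NTA constants enters. Hence $v/u\le C$ on that set, and the whole problem reduces to the thin neighborhood $\{x\in\Omega\cap B_{1/2}:\operatorname{dist}(x,\partial\Omega)<\delta_0\}$.

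On the thin neighborhood I would imitate the classical dyadic oscillation-decay proof of the boundary Harnack principle, applied directly to $u$ and $v$ and absorbing the right-hand side scale by scale. Fix $z\in\partial\Omega\cap B_{1/2}$, and for $r=2^{-k}r_0$ with $r_0$ small let $\hat u,\hat v$ be the harmonic functions on $\Omega\cap B_r(z)$ agreeing with $u,v$ on $\partial(\Omega\cap B_r(z))$ (using (D)); by (C), compared against the rescaled torsion-type barrier $(r^2-|x-z|^2)/2n$, one has $\hat u,\hat v\ge 0$, these functions vanish on $\partial\Omega\cap B_r(z)$, and
\[
 |u-\hat u|,\ |v-\hat v|\le C r^2\quad\text{on }\Omega\cap B_r(z).
\]
Ingredient (A) applied to the harmonic pair $(\hat u,\hat v)$ gives both the Carleson comparability of $\hat v/\hat u$ with its value at a corkscrew point $A_{z,r}$ and, crucially, an oscillation contraction $\operatorname{osc}_{\Omega\cap B_{r/2}(z)}(\hat v/\hat u)\le\theta\,\operatorname{osc}_{\Omega\cap B_r(z)}(\hat v/\hat u)$ for some $\theta=\theta(\mathrm{NTA},n)\in(0,1)$. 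Here (B) is decisive: since $\beta<2$, at the corkscrew point $u(A_{z,r})\ge c\,r^\beta\gg Cr^2$, so $\hat u(A_{z,r})\asymp u(A_{z,r})$ and $\hat v(A_{z,r})\asymp v(A_{z,r})$ with constants independent of $r$, which lets one pass between $\hat v/\hat u$ and $v/u$ and thereby convert the additive error $Cr^2$ into a \emph{relative} error of order $r^{2-\beta}$. Tracking this over dyadic scales and applying the estimate at scale $r\sim\operatorname{dist}(x,\partial\Omega)$ for each $x$, one arrives at a recursion of the schematic form $\omega_{k+1}\le\theta\,\omega_k+C\,(2^{-k}r_0)^{2-\beta}$, where $\omega_k$ is the oscillation of $v/u$ over the part of $\Omega\cap B_{1/2}$ within $2^{-k}r_0$ of $\partial\Omega$; since $\theta<1$ and $\sum_k 2^{-k(2-\beta)}<\infty$, the $\omega_k$ stay bounded (in fact decay), and with the interior bound this yields $v/u\le C_0$ on all of $\Omega\cap B_{1/2}$.

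The step I expect to be the main obstacle is precisely this transfer from $\hat v/\hat u$ back to $v/u$ uniformly up to the boundary. For points $x$ with $\operatorname{dist}(x,\partial\Omega)$ not too small relative to $r$ the comparison $u\asymp\hat u$ is immediate from (B); for points closer to $\partial\Omega$ — where $u(x)$ itself may be comparable to the replacement error — one must instead re-run the construction at the scale $r\sim\operatorname{dist}(x,\partial\Omega)$ and verify that the errors accumulated across \emph{all} scales remain summable. Organizing the induction so that what one genuinely obtains for the perturbed quotient is the contraction factor $\theta<1$ of (A), rather than a factor $\ge 1$, is what prevents a $\log(1/\operatorname{dist})$ loss, and is where the real work lies; the restriction $\beta<2$ plays the complementary role of making the per-scale error $r^{2-\beta}$ vanish as $r\to 0$, so that it cannot overwhelm the geometric decay furnished by (A).
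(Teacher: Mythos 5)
Your proposal is correct in outline and rests on the same four ingredients the paper uses, but the iteration is organized differently from the paper's. The paper proves Theorem \ref{t:nta} by simply verifying the hypotheses (P1)--(P8) of the abstract Theorem \ref{t:meta} (with $H$ the Perron solution operator on NTA subdomains, (P1), (P3), (P8) from \cite{JK}, (P6) from a torsion barrier as in Lemma \ref{l:remainder}), normalizing $u,v$ at an interior point by a Harnack chain exactly as you do, and invoking the metatheorem. Inside the metatheorem the scales are \emph{not} dyadic: the shells are $A_k=\{r_k\le d(\cdot,\partial U)\le r_{k-1}\}$ with $r_k=r_{k-1}^\gamma$, $\gamma>1$, and the harmonic replacement is performed at the mismatched scale $s=r_{k-1}^\sigma\gg r_{k-1}$ with $\sigma<1$. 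This mismatch is the whole point: the paper only uses the \emph{multiplicative} form of the homogeneous BHP, $u(x)/v(x)\le(1+C(|x-y|/s)^\alpha)\,u(y)/v(y)$, whose error $(r_{k-1}/s)^\alpha$ is small only because $s\gg r_{k-1}$; with your dyadic choice $s\sim r_{k-1}$ that factor is $1+C$ and the product diverges, which is precisely the obstruction you identify. You compensate by invoking the stronger oscillation-contraction form $\mathrm{osc}_{B_{r/2}}(\hat v/\hat u)\le\theta\,\mathrm{osc}_{B_r}(\hat v/\hat u)$, $\theta<1$, which is indeed available for harmonic functions on NTA domains and does let a genuinely dyadic scheme close. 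Two caveats if you carry this out: (i) the additive error in your recursion is not an absolute $C(2^{-k}r_0)^{2-\beta}$ but rather $C(1+M_k)(2^{-k}r_0)^{2-\beta}$ with $M_k$ the running supremum of $v/u$ on the $k$-th collar (the term $\hat v(\tfrac1u-\tfrac1{\hat u})$ carries the quotient), so you must run a coupled recursion for the oscillation and the supremum --- it still closes since $\sum_k 2^{-k(2-\beta)}<\infty$; (ii) the replacement pair $(\hat u,\hat v)$ changes from scale to scale, so relating the oscillation of the new pair on $B_{r_{k+1}}$ to that of the old pair requires passing through $v/u$ at points a definite fraction of $r_k$ from $\partial\Omega$ (corkscrew points) together with the H\"older continuity of each homogeneous quotient up to the boundary. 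In exchange for this extra bookkeeping your route needs only the classical contraction statement, whereas the paper's choice of super-geometric scales buys a proof that works with the weakest form of (P8) and transfers verbatim to the nonlinear settings of Theorems \ref{t:flmain} and \ref{t:flpmain}.
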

Unlike in Theorems \ref{t:flmain} and \ref{t:flpmain}, we do not assume that $\Delta u \leq \epsilon$ small here, nor that the domain is somehow flat: the growth bound is all that is required, though it does carry some indirect implications about the geometry of $\partial \Omega$ and $\Delta u$.

%%%%%%%%%%%%%%
%%%%%%%%%%%%%%
%%%%%%%%%%%%%%

\section{A Metatheorem}\label{sec:meta}

Let $H_\Omega$ be a family of operators mapping $ C(\Omega)\times C(\partial \Omega) \to C(\bar{\Omega})$, and $Q$ a collection of open sets. The map $H$ should be thought of as a solution operator, mapping boundary data and right hand sides to solutions of an elliptic PDE. Fix a particular open set $U \in Q$. Let $V \subset C(\bar{U})$ consist of some subset of functions $u \geq 0$ with $H[f, u] = u$ on $U$ for some $f$ (generally this may be interpreted as positive functions with $f$ bounded by $1$, but only some specific properties below will be relevant; in fact neither $u \geq 0$ nor $H[f, u] = u$ are used explicitly in the proof below). Assume the following properties for  $H_\Omega$:

\begin{enumerate}
	\item[(P1)] Localization: For every $r > 0$ and $x \in \bar{U}$, there is a set $U_{x, r} \in Q$ such that $U_{x, r} \subset B_{2r}(x)$ and 
	$U \cap B_r(x) = U_{x, r} \cap B_r(x)$.
	\item[(P2)] Homogeneity: $H_\Omega[0, 0] = 0$ for every $\Omega \in Q$.
	\item[(P3)] Solvability: If $\Omega \in Q$, then $H_{\Omega}[f, g] = g$ on $\partial \Omega$ for any $g \in C(\partial \Omega)$.
	\item[(P4)] Extension: If $\Omega \subset \Omega' $ are in $Q$, then $H_{\Omega}[f_\Omega, H_{\Omega'}[f, g]|_{\partial \Omega} ] = H_{\Omega'}[f, g]$ on $\Omega$.
	\item[(P5)] Comparison:\footnote{It is possible to replace this assumption with a {\it homogeneous minimum principle}: if $g \geq 0$, then $H_{\Omega}[0, g] \geq 0$, though without the full comparison principle some of the remarks and typical applications will not follow. In cases where lower-order terms interfere with the comparison principle, it may still be possible to apply the results here by treating the lower-order terms as an inhomogeneity instead. For example, when studying $\Delta u = - \lambda u$ for $\lambda > 0$, our theorem will apply if one first shows $u$ is bounded, and then sets $\Delta u = f = - \lambda u$, with $f \in [- C, 0]$. Here $H_\Omega$ should be set to the solution to the Laplace equation, not to the eigenvalue problem.}
	 If $f_1 \geq f_2$ and $g_1 \leq g_2$, then $H_{\Omega}[f_1, g_1] \leq H_{\Omega }[f_2, g_2]$.
 	\item[(P6)] Approximation: For any set $\Omega = U_{x, r}$ from (P1) with $r \leq \frac{1}{4}$,  $x \in B_{1/2} \cap \partial U_{x, r}$, and $u\in V$, we have $|u - H_\Omega[0, u]|\leq C_1 r^\zeta$ for some $\zeta > 1$ on $\Omega$.
	\item[(P7)] Harnack: For any $u \in V$ and $B_{2r}(x) \subset U$    
	\[
	\sup_{B_r(x)} u \leq C_2 [\inf_{B_r(x)} u + 1].
	\]
	\item[(P8)] Boundary Harnack: For any $a \in \partial U$ and $\Omega = U_{a, r}$ from (P1), let $u_1, u_2$ satisfy $H_\Omega[0, u_1] = u_1$ and $H_\Omega[0, u_2] = u_2$. Assume, moreover, that $u_1, u_2 \geq 0$ on $\Omega$ and $u_1, u_2 = 0$ on $\partial U \cap B_r(a)$. Then
	\[
	\frac{u(x)}{v(x)} \leq (1 + C_3 \frac{|x - y|^\alpha}{r^\alpha})\frac{u(y)}{v(y)}
	\]
	for any $x, y \in U \cap B_{r/2}(a)$.
\end{enumerate}

In addition, we will use the following concept of $1$-sided NTA (or uniform) domain:

\begin{definition}
	A domain $\Omega \subset \R^n$ is a $1$-{\it sided NTA domain} (with constant $K$) if it satisfies the following two conditions:
	\begin{enumerate}
		\item[(D1)] For every $x \in \partial \Omega$ and $0 < r < \text{diam}(\Omega)$, there exists a ball $B_{r/K}(y) \subset \Omega \cap B_r(x)$.
		\item[(D2)] For every $x, y \in \Omega$, there is a curve $\gamma : [0, 1] \rightarrow \Omega$ with $\gamma(0) = x$, $\gamma(1) = y$, $l(\gamma([0, 1])) \leq K |x - y|$, and $\min\{ l(\gamma([0, t])), l(\gamma([t, 1]))  \} \leq  K d(\gamma(t), \partial \Omega)$ for all $ t \in [0, 1]$. Here $l$ denotes length.
	\end{enumerate}
\end{definition}

Our main theorem can now be phrased as follows:

\begin{theorem}\label{t:meta} Let $Q$, $H_\Omega$, and $U$ satisfy (P1-8), assume that $U$ is a $1$-sided NTA domain with constant $K$, and $0\in \partial U$. Then there is a constant $c_* = c_*(n, K)$ such that the following holds: let $u_1, u_2 \in V$ with  $u_i > 0$ on $U \cap B_1$, $u_i = 0$ on $\partial U \cap B_1$ ($i=1,2$), and  assume that for some $\beta \in (0, \zeta)$ $u_i$ satisfies the growth condition
	\begin{equation}\label{lower-bound}
	u_i(x) \geq C_4 d^\beta(x, \partial U \cap B_1) \qquad \forall x \in U.
	\end{equation}
	 In addition, assume that $u_1(x^0) = 1$ for some    $x^0 \in B_{c_*} (0)$ with
	 $   d(x^0, \partial U)  \geq c_*^2 $.
	  Then
	\[
	\frac{u_1}{u_2} \leq C_*
	\]
	on $B_{c_*} \cap U$. The constant $C_*$ depends only on $n, K, C_1, C_2, C_3, C_4, \zeta, \beta$ (where $C_1, C_2, C_3$ are the constants from (P6-8)).
\end{theorem}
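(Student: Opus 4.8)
The plan is to fix an arbitrary $x\in B_{c_*}\cap U$, set $\rho=d(x,\partial U)$, choose a boundary point $a\in\partial U$ with $|x-a|=\rho$ (so $a\in B_{2c_*}$), and bound $u_1(x)/u_2(x)$ by an induction over a \emph{rapidly} shrinking sequence of scales centered at $a$. Concretely, put $r_0\sim c_*$ and $r_{k+1}=r_k^{1+\sigma}$ for a small $\sigma=\sigma(\zeta,\beta)>0$, and introduce the interior shells $S_k:=\{z\in U\cap B_{r_k/2}(a):\ d(z,\partial U)\ge r_k^{\zeta/\beta-\epsilon}\}$ for a small $\epsilon=\epsilon(\zeta,\beta)>0$. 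One picks $\sigma,\epsilon$ small (which is possible since $\zeta/\beta-1>0$ has room to spare) so that three things hold simultaneously: the approximation errors appearing below are summable in $k$; the corkscrew point of $B_{r_{k+1}}(a)$ lies in $S_k$; and the $S_k$ exhaust $U\cap B_{c_*}$, i.e.\ every $x$ as above lies in some $S_N$ (a pigeonhole on the sequence $\log(1/r_k)$, which is geometric with ratio $1+\sigma$). Writing $M_k:=\sup_{S_k}u_1/u_2$, the goal becomes $M_N\le C_*$.

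For the base case I would bound $M_0$ using only (P7) and the growth hypothesis. Since $d(z)\ge r_0^{\zeta/\beta-\epsilon}$, a fixed positive number, for $z\in S_0$, applying (P7) along a Harnack chain of bounded length — whose existence and length bound come from the cigar condition (D2) of the $1$-sided NTA domain, started from $x^0$ where $u_1(x^0)=1$, with (D1) providing the needed corkscrew balls — gives $u_1\le C$ on $S_0$, while \eqref{lower-bound} gives $u_2\ge c>0$ on $S_0$. Hence $M_0$ is bounded by a constant of the allowed form (note only $u_1(x^0)=1$ and the lower bound on $u_2$ enter here, consistent with the one-sided nature of the conclusion).

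The heart of the matter is the inductive step $M_{k+1}\le(1+\nu_k)M_k$ with $\sum_k\nu_k<\infty$. On $\Omega_k:=U_{a,r_k}$, available from (P1) with $\Omega_k\subset B_{2r_k}(a)$ and $\Omega_k\cap B_{r_k}(a)=U\cap B_{r_k}(a)$, set $w_i^k:=H_{\Omega_k}[0,u_i]$: by (P3) this is solvable and equals $u_i$ on $\partial\Omega_k$, by (P2) and (P5) it is $\ge 0$, it vanishes on $\partial U\cap B_{r_k}(a)$, and by (P6) $|u_i-w_i^k|\le C_1 r_k^\zeta$ on $\Omega_k$. From \eqref{lower-bound} one gets $u_i\gtrsim r_k^{\zeta-\epsilon\beta}$ on $S_k$ and $u_i\gtrsim r_{k+1}^{\zeta-\epsilon\beta}$ on $S_{k+1}$; since $\zeta>\beta$ the \emph{relative} error $|u_i-w_i^k|/u_i$ on $S_k\cup S_{k+1}$ is then $\le\tau_k$ with $\sum_k\tau_k<\infty$, so on those sets $u_1/u_2$ and $w_1^k/w_2^k$ agree up to the factor $(1+\tau_k)/(1-\tau_k)$. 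Now take $z\in S_{k+1}$: if $z\in S_k$ then $u_1(z)/u_2(z)\le M_k$ already; otherwise apply (P8) to $w_1^k,w_2^k$ (legitimate since both are $\ge 0$ and vanish on $\partial U\cap B_{r_k}(a)$) with the points $z$ and $P:=$ corkscrew of $B_{r_{k+1}}(a)$, both lying in $U\cap B_{r_k/2}(a)$ at distance $\le\tfrac32 r_{k+1}=\tfrac32 r_k^{1+\sigma}$; the (P8) factor is $1+C_3(\tfrac32 r_k^{\sigma})^\alpha$, which tends to $1$ summably — this is exactly what forces the super-geometric choice $r_{k+1}=r_k^{1+\sigma}$, since for geometric scales the factor would be a fixed constant $>1$ and accumulate. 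As $P\in S_k$, this yields $w_1^k(z)/w_2^k(z)\le (1+C_3(\tfrac32 r_k^\sigma)^\alpha)\,w_1^k(P)/w_2^k(P)\le(1+C_3(\tfrac32 r_k^\sigma)^\alpha)(\tfrac{1+\tau_k}{1-\tau_k})M_k$, and converting back via the approximation gives $u_1(z)/u_2(z)\le(1+\nu_k)M_k$; taking the supremum over $z\in S_{k+1}$ gives the claim.

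Telescoping, $M_N\le M_0\prod_k(1+\nu_k)\le C_*$ with $C_*$ depending only on $n,K,C_1,\dots,C_4,\zeta,\beta$, and since $x\in S_N$ we conclude $u_1(x)/u_2(x)\le C_*$ on $B_{c_*}\cap U$. I expect the main obstacle to be precisely the calibration in the inductive step: one must choose the interior-cutoff exponent $\zeta/\beta-\epsilon$ and the scale exponent $1+\sigma$ so that (a) the (P6) error is relatively small and summable on the shells, (b) the (P8) Hölder improvement contributes only a summable multiplicative loss — which fails at geometric scales and demands $r_{k+1}/r_k\to 0$, and (c) consecutive shells overlap enough (the corkscrew of $B_{r_{k+1}}(a)$ must lie in $S_k$) while still covering a full punctured neighborhood of $\partial U$. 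These constraints are mutually compatible because $\zeta>\beta$, and verifying that every intervening constant depends only on the listed quantities is the bulk of the work; the rest is bookkeeping with (P1)--(P8) and (D1)--(D2).
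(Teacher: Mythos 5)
Your proposal is correct and follows essentially the same strategy as the paper's proof: approximate $u_i$ by homogeneous solutions via (P1), (P3), (P6), turn the additive error into a multiplicative one using the growth hypothesis, apply (P8) between a point of the current shell and a corkscrew point of the previous one, and telescope a convergent infinite product over super-geometrically shrinking scales $r_{k+1}=r_k^{1+\sigma}$ (the paper's $r_k=r_{k-1}^{\gamma}$), all of which is possible precisely because $\beta<\zeta$. The only cosmetic differences are that the paper's shells $A_k$ are level sets of $d(\cdot,\partial U)$ in a fixed neighborhood of $0$ rather than truncated balls centered at a boundary point chosen for each $x$, and that the paper decouples the boundary-Harnack scale $s=r_{k-1}^{\sigma}$ from the shell scale; both calibrations implement the same mechanism.
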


\begin{remark}\label{r:nta}
	While $U$ being a $1$-sided NTA domain suffices for our argument, verifying property (P3), and possibly (P1), will often require making stronger assumptions. For the Laplace equation, two-sided NTA domains (where the complement of $U$ also satisfies (D1)) do have these properties, and in particular (P1) may be found in \cite{JK}. If one is working on Lipschitz graph domains $D_{L, r}$ as we do below, some of the details here can be simplified, and the $U_{x, r}$ can simply be chosen to be $U \cap B_{r}(x)$. It is worth noting, however, that (P8) does hold on $1$-sided NTA domains at least for the Laplace equation \cite{A2}, and this is roughly the most general class of domains on which it might be expected to hold \cite{A1}.
\end{remark}

\begin{remark}
	Although we  only assume that $u_1(x^0) = 1$ in the above theorem,  the lower  bound
\eqref{lower-bound} automatically 	 implies that 	$u_2(x^0) \geq c$, 
while an upper bound for $u_2$ is unnecessary. 
	An abstract argument shows that for $u_1$, \eqref{lower-bound} may be replaced with a growth condition on the corresponding homogeneous equation, up to increasing the radii slightly: first, if $u_i = H_{U}[f_i, u_1]$, let $w = H_{U_{0, 1}}[f_i^-, u_1]$ and use 
	 the comparison principle  to ensure  $u_1 \leq w$. Thus it suffices to prove the theorem for $w$. Then set $h = H[0, u_1]$: this has $h \leq w$, so a growth estimate for $h$ implies the same  for $w$. Growth estimates for solutions to the homogeneous equation are equivalent to one another, from (P8); therefore  in some cases (e.g. $f_2 \geq 0$) this estimate on $u_1$ may be redundant or easily obtainable. On the other hand, an inspection of the proof shows that if $f_2 = 0$ (and if $f_2 \leq 0$, after applying the comparison principle), then \eqref{lower-bound} for $u_2$ may be replaced with the condition $u_2(x^0) \geq 1$: the approximating function $v_2$ is equal to $u_2$, so \eqref{e:meta} below is automatic.
\end{remark}

\begin{proof}
	Let $\{r_k\}_{k = 0}^\infty$ be a decreasing sequence of numbers $r_k \leq \frac{1}{4}r_{k - 1}$ to be determined below, with $r_0 = \frac{c_*^2}{2}$, and 
	$$A_k = \{x \in U \cap B_{c_* + \frac{r_{k - 1}}{c_*}} : r_{k} \leq d(x, \partial U)  \leq r_{k - 1}  \}, \qquad k \geq 1,$$
	 with $A_0 = \{ x \in U \cap B_{2 c_*} : d(x, \partial U) \geq r_0 \}$; the constant $c_*$  will be chosen below in terms of $K$ and $n$ only.
	 Here $K$ is a constant which will be determined later, depending only on the NTA constant of $U$. Let   
	\[
	M_k = \sup_{A_k} \frac{u_1}{u_2};
	\]
	as $u, v$ are continuous and positive, we have $M_k < \infty$, for each $k$. Our main goal is to estimate $M_k$ in terms of $M_{k - 1}$, but we first consider $M_0$.
	
	 Applying (D2), if $c_*$ is sufficiently small in terms of the NTA constant $K$, any $x, y \in B_{c_*} \cap U$ may be connected by a curve as described there which is contained in $B_{1/2}$. Furthermore, from (D1) for any $x \in \partial U$, and every $r < 1$, there is a ball $B_{r/K}(y) \subset U \cap B_r(x)$; so long as $c_* \leq \frac{1}{2K}$, $B_{r c_*/2}(y)$ has the same property. We now fix $c_*$ so that these properties hold.
	
	To estimate $M_0$, we first observe that by the lower bound assumption \eqref{lower-bound}
we have   $u_2 \geq C$ on $A_0$. 	 On the other hand, we know that $u_1(x^0) = 1$, that $x^0 \in A_0$ and any other point $x \in A_0$ may be connected to $x^0$ via a path in $B_{1/2}$ of bounded length and staying a  distance at least $c = r_0/K$ 
from the boundary $\partial U$. This path may be covered by finitely many balls of radius $c/2$, and applying the Harnack principle (P7) to each ball consecutively gives that $u_1(x)$ is bounded. Taking the supremum, we see that $M_0$ is bounded in terms of $K$ and the constant $C_2$ in (P7).
	
	Now take any point $x \in A_k$, and let  $y \in \partial U$ with $|y - x| \leq r_{k - 1}$. Use the NTA property to find a ball $B_{r_{k - 1}}(z) \subset U \cap B_{r_{k - 1}/2c_*}(y)$; then we have that $d(z, \partial U) \geq r_{k -1}$, while    
	\[
	|z| \leq |z - y| + |y - x| + |x|\leq  \frac{r_{k - 1}}{2 c_*}  + r_{k - 1} + c_* + \frac{r_{k - 1}}{c_*} \leq c_* + \frac{r_{k - 2}}{c_*} 
	\]
	if $k \geq 2$, using here that $r_{k - 1} \leq \frac{1}{4}r_{k - 2}$. If $k = 1$, then using $r_0 = c_*^2/2$ gives 
	\[
	|z| \leq \frac{r_{0}}{2 c_*}  + r_{0} + c_* + \frac{r_{0}}{c_*}  (\frac{1}{4} + \frac{c_*}{2} + 1 + \frac{1}{2})c_*\leq 2 c_*
	\]
	instead.	
	 Consider the line segment connecting the points $z$ and $y$: all points on this line segment must lie inside $B_{c_* + \frac{r_{k - 2}}{c_*}}$ and $B_{r_{k - 1}/2c_*}(y)$ as well, as both endpoints do and balls are convex. As $d(y, \partial U) = 0$, $d(z, \partial U) \geq r_{k-1}$, and the distance is continuous, we may find some $z^1$ on the line segment such that $d(z^1, \partial U) = r_{k - 1}$. In particular, the two important properties are that $z^1 \in A_{k - 1}$, while $x, z^1 \in B_{r_{k - 1}/2c_*}(y)$.
	
	Next we fix $U_{y, s}$ with $s \geq \frac{r_{k - 1}}{c_*}$ to be chosen below, and use (P1) and (P3) to find $v_1, v_2$ which satisfy $H_{U_{y, s}}[0, u_i] = v_i$ (recall that this is analogous to solving the homogeneous equation on $U_{y, s}$ with boundary data given by $u_i$). Note that by comparison (P5), with the function $0$, and homogeneity (P2), we have $v_i \geq 0$ on $U_{y, s}$
	There are two main estimates we need for $u_i$ and $v_i$. The first is from (P6): we have that $|v_i - u_i|\leq C_1 s^\zeta$ on $U \cap B_{s}(y)$. We may further combine it with the assumed growth estimate \eqref{lower-bound} to arrive at  
	\begin{equation}\label{e:meta}
	|u_i - v_i| \leq C_1 s^\zeta \leq u_i \frac{C_1 s^\zeta}{C_4 r_k^\beta}
	\end{equation}
	on $B_s(y) \cap (A_k \cup A_{k-1})$, that may be rephrased  as    	  	
	\begin{equation}
	(1 - C \frac{s^\zeta}{r_k^\beta}) u_i \leq	v_i \leq (1 + C \frac{s^\zeta}{r_k^\beta}) u_i,
	\end{equation}
	so after dividing
	\begin{equation}
	(1 - C \frac{s^\zeta}{r_k^\beta}) v_i \leq	u_i \leq (1 + C \frac{s^\zeta}{r_k^\beta}) v_i
	\end{equation}
	on this region, so long as $r_k^\beta$ is much larger than $s^\zeta$, which we will ensure below.
	
	On the other hand, we may apply (P8), the homogeneous boundary Harnack principle, to $v_i$. We apply it specifically with $r = s$, $a = y$, $x = x$, and $y = z^1$, to get that
		\begin{equation}\label{e:v1v2}
	v_1(x) \leq v_2(x) \frac{v_1(z^1)}{v_2(z^1)} (1 + C \frac{r_{k-1}^\alpha}{s^\alpha} ).
	\end{equation}	
	Now, $z^1 \in A_{k -1}$, so there we may argue as follows, using \eqref{e:meta}: 
		\[
	\frac{v_1(z^1)}{v_2(z^1)}  \leq 
 (1 + C \frac{s^\zeta}{r_k^\beta})^2 \frac{u_1(z^1)}{u_2(z^1)} 
  \leq (1 + C \frac{s^\zeta}{r_k^\beta}) M_{k - 1}.
	\]
		This along with \eqref{e:v1v2} gives
	\[
	\frac{v_1(x)}{v_2(x)} \leq  (1 + C \frac{s^\zeta}{r_k^\beta} + C \frac{r_{k-1}^\alpha}{s^\alpha} ) M_{k - 1},
		\]
	and finally using \eqref{e:meta} again but this time at $x$,
	\[
	\frac{u_1(x)}{u_2(x)} \leq	(1 + C \frac{s^\zeta}{r_k^\beta})^2 \frac{v_1(x)}{v_2(x)} \leq (1 + C \frac{s^\zeta}{r_k^\beta} + C \frac{r_{k-1}^\alpha}{s^\alpha} ) M_{k - 1}.
	\]
	This entire construction can be done at any $x \in A_k$, so taking the supremum gives
	\[
	M_k \leq (1 + C \frac{s^\zeta}{r_k^\beta} + C \frac{r_{k-1}^\alpha}{s^\alpha} ) M_{k - 1}.
	\]
	
	Now we must choose $s$ and $r_k$ in an appropriate manner; we have already required that $s^\zeta \ll r_k^\beta$, $r_{k - 1} \ll s$, and $r_k \leq r_{k -1}/4$. To proceed, select a $\gamma > 1$ such that $\beta \gamma < \zeta$, and set $r_k = r_{k - 1}^\gamma$. This immediately implies  that $r_k \leq \frac{r_{k-1}}{4}$. Next, choose a $\sigma < 1$ with $\zeta \sigma > \beta \gamma$, and set $s = r_{k-1}^\sigma = r_k^{\sigma/\gamma}$; this has the other two necessary properties. With these choices, our recurrence relation may be rewritten as
	\[
	M_k \leq (1 + C r_k^{\zeta \sigma / \gamma - \beta} + C r_k^{(1 - \sigma)\gamma\alpha} ) M_{k - 1}.
	\]
	As $r_k \leq \frac{1}{4}r_{k-1}$, $r_k \leq r_0 4^{-k}$, we'll have 
	\[
	M_k \leq (1 + C 4^{-ck})M_{k-1} \leq \prod_{i = 1}^{\infty}(1 + C 4^{-ck})M_0.
	\]
	This infinite product is finite, giving $M_k \leq C M_0$ for all $k$. As the union of the $A_k$ exhausts $B_{c_*} \cap U$, we have shown that
	\[
	\sup_{B_{c_*} \cap U} \frac{u_1}{u_2} \leq C M_0.
	\]
	This completes the proof.
\end{proof}

\begin{proof}[Proof of Theorem \ref{t:nta}]
Set $U = \Omega$, $Q$ the collection of (two-sided, as in Remark \ref{r:nta}) $NTA$ domains with constant at most $K$, and $H_{U_{x, r}}[f, g]$ the Perron solution to the Laplace equation on $U_{x, r}$. Set $V = \{ u \in C(\bar{U}) : u \geq 0, |\Delta u|\leq A  \text{ on } U \}$, with $A$ to be determined in terms of $c_*$ and the given constants only. Then (P1), (P3), and (P8) follow from \cite{JK} as long as $K$ is taken to be a sufficiently large multiple of the NTA constant of $\Omega$, while (P2), (P4), (P5), and (P7) are classical. The approximation property (P6) follows from an elementary barrier argument (as in Lemma \ref{l:remainder} below). After applying the Harnack inequality repeatedly, we have that
\[
C \geq u, v \geq c
\]
on $U \cap B_2 \cap \{ d(x, \partial U) \geq c^2_* \}$. Fix $A$ so that if we define the functions $u_1 = u(\cdot)/u(x^1)$, $u_2 = v(\cdot)/v(x^1)$ on $B_1(x)$ for $ x\in \partial U \cap B_1$ and $x^1$ a point in $B_{c_*}(x)$ a distance at least $c_*^2$ from $\partial U$, they have $|\Delta u_i|\leq \max\{\frac{1}{u(x^1)}\frac{1}{v(x^1)}\} \leq A$.

Applying Theorem \ref{t:meta} to $u_1$, $u_2$ on $B_1(x)$ for every $x\in \partial U \cap B_1$ gives that
\[
	\sup_{B_1 \cap U \cap \{z: d(z, \partial U) < c_* \}} \frac{u}{v} \leq C,
\]
which implies the conclusion.
\end{proof}

%%%%%%%%%%%%%%%%%%%%
%%%%%%%%%%%%%%%%%%%%
%\subsection{Nondivergence form equations}   \label{s:nondiv}
%%%%%%%%%%%%%%%%%%%%
%%%%%%%%%%%%%%%%%%%%

%Talk about how the old proof works or the new proof. 

%%%%%%%%%%%%%%%%%%%%%
%%%%%%%%%%%%%%%%%%%%%%
\section{Fully Nonlinear Equations}  \label{s:fne}
%%%%%%%%%%%%%%%%%%%%%%
%%%%%%%%%%%%%%%%%%%%%%%

Let $S(n)$ be the set of symmetric $n \times n$ matrices, $\Lambda \geq \lambda>0$ and $M\geq 0$ be constants, and $P_{\Lambda,\lambda}^-, P_{\Lambda, \lambda}^+$ the extremal Pucci operators defined by 
\[
P_{\Lambda,\lambda}^-(R) = \lambda \sum_{e_i>0} e_i + \Lambda \sum_{e_i <0} e_i \qquad P_{\Lambda,\lambda}^+(R) = \Lambda \sum_{e_i>0} e_i + \lambda \sum_{e_i <0} e_i,
\]
where $e_i$ are the eigenvalues of $R$.

As our method requires that the boundary Harnack principle already holds for solutions to the homogeneous equation, we will require the same structural conditions for fully nonlinear 
equations as required in \cite{f01} where a boundary Harnack principle without right hand side is shown.  We therefore assume that $F: S(n) \times \R^n \rightarrow \R$ (the nonlinear operator in our equation $F(D^2 u, \nabla u) = f$) satisfies 
\begin{equation}   \label{e:struc1}
P_{\Lambda,\lambda}^- (R-S) - M|p-q| \leq F(R,p) - F(S,q) \leq P_{\Lambda, \lambda}^+(R-S) + M|p-q|
\end{equation}
for $R,S \in S(n)$ and $p,q \in \mathbb{R}^n$.

We also assume that $F$ is positively homogeneous of degree $1$, i.e.
\begin{equation}   \label{e:construc2}
 F(\gamma R, \gamma p) = \gamma F(R,p)  \quad \text{ for all } \gamma >0, \quad R \in S(n), p \in \mathbb{R}^n. 
\end{equation}

We follow \cite{f01, CIL} when we write $F(D^2 u, \nabla u) \leq (\geq) f$ {\it in the viscosity sense} for a continuous function $f$. The key property of viscosity solutions is the following comparison-type fact: if $F(D^2 u, \nabla u) \geq f$ and $F(D^2 v, \nabla v) \leq g$ on a domain $\Omega$, in the viscosity sense, then $P_{\Lambda, \lambda}^-(D^2 (v - u)) - M |\nabla v - \nabla u| \leq g - f$ in the viscosity sense on $\Omega$. The proof is straightforward if one of $v, u$ is $C^2$ from the definitions and \eqref{e:struc1}, but the general case may be derived from \cite{CIL}.

We recall the following notation from \cite{as19} for Lipschitz domains. We consider Lipschitz domains $D_{L,R}$ where 
\[
 D_{L,R} := \{(x',x_n) \in B_R : x_n > g(x')\},
\]
and $g$ is a Lipschitz function with constant at most $L$, that is $|g(x')-g(y')|\leq L|x'-y'|$. We will assume $g(0)=0$, and will write $D_{L,\infty}$ if $R=\infty$. 

\subsection{Approximation and homogeneous boundary Harnack}

We need the following classical boundary Harnack principle, which is Lemma 2.4 in \cite{f01}:
\begin{lemma}  \label{l:classic}
 Let  $F(D^2u, \nabla u)=F(D^2 v, \nabla v)=0$  in $D_{L,1}$ (in the viscosity sense), with $u,v\geq 0$, and $u=v=0$ on $\partial D_{L,1} \cap B_{3/4}$. 
  If $u(e_n/2)=v(e_n/2)=1$, then there exists $C(\Lambda, \lambda, M, L, n)$ such that 
 \[
  \frac{u}{v} \leq C \in D_{L,1/2}. 
 \]
  
\end{lemma}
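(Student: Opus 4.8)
The plan is to cite \cite{f01} for the bulk of the argument, since this is stated as a classical result, but I will outline the structure of a self-contained proof in case the reader wants to see why it holds under the structural assumptions \eqref{e:struc1} and \eqref{e:construc2}. The key point is that for \emph{solutions} (not supersolutions with a right-hand side) of a fully nonlinear equation satisfying \eqref{e:struc1}, the difference $w = u - tv$ can be controlled using the extremal operators: by the comparison-type fact recalled above, $P_{\Lambda,\lambda}^-(D^2 w) - M|\nabla w| \leq 0$ and $P_{\Lambda,\lambda}^+(D^2 w) + M|\nabla w| \geq 0$ in the viscosity sense, so $w$ is simultaneously a subsolution and supersolution of a linear uniformly elliptic equation (in the Pucci sense). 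This reduces the problem to a boundary Harnack principle for such operators in Lipschitz domains, which is where the Lipschitz constant $L$ enters.

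First I would establish the two one-sided bounds that drive every proof of the boundary Harnack principle: (i) an interior Harnack chain estimate, giving $u \sim u(e_n/2)$ and $v \sim v(e_n/2)$ along the axis $\{(0,x_n): c \le x_n \le 3/4\}$, so in particular $u(A_r) \approx v(A_r)$ at the ``corkscrew'' points $A_r = (0, r)$ for $r$ down to some fixed scale; and (ii) a \emph{Carleson estimate} (oscillation/boundary growth estimate): for $x \in D_{L,1/2}$ near a boundary point $z \in \partial D_{L,1} \cap B_{1/2}$, one has $u(x) \le C\, u(A_{c|x-z|}(z))$, where $A_\rho(z)$ is a corkscrew point for the ball $B_\rho(z)$. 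Both (i) and (ii) follow from the interior Harnack inequality for Pucci-extremal subsolutions/supersolutions (Krylov--Safonov) together with the Lipschitz geometry: the Lipschitz condition provides the non-degeneracy needed to build Harnack chains staying a definite distance from $\partial D_{L,1}$, and it provides the interior and exterior cone conditions needed to construct the barriers that yield the Carleson estimate. The homogeneity \eqref{e:construc2} is what allows rescaling these estimates cleanly across dyadic scales.

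Combining (i) and (ii): given $x \in D_{L,1/2}$, pick the nearest boundary point $z$ and let $\rho = |x-z|$; then $u(x) \le C u(A_{c\rho}(z))$ by the Carleson estimate, $u(A_{c\rho}(z)) \le C v(A_{c\rho}(z))$ by a Harnack chain from $A_{c\rho}(z)$ to $e_n/2$ combined with $u(e_n/2)=v(e_n/2)=1$ (here we also need $v(A_{c\rho}(z)) \ge c u(A_{c\rho}(z))$, which again comes from Harnack chains), and finally $v(A_{c\rho}(z)) \le C v(x)$ — this last step is the reverse Carleson / non-degeneracy bound, asserting that $v$ does not decay faster than the corkscrew value, which for solutions of a Pucci-type equation in a Lipschitz domain follows from a barrier built on the interior cone condition. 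Chaining these gives $u(x) \le C v(x)$ with $C = C(\Lambda,\lambda,M,L,n)$, and by symmetry also $v(x) \le C u(x)$; either bound gives the stated conclusion $u/v \le C$ in $D_{L,1/2}$.

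The main obstacle — and the reason this is imported from \cite{f01} rather than proved here — is the reverse Carleson (lower growth) estimate $v(x) \ge c\, v(A_{c\rho}(z))$ in the last paragraph, i.e.\ showing that a nonnegative solution vanishing on a Lipschitz portion of the boundary cannot decay faster than the natural rate dictated by the opening of the interior cone. For the Laplacian this is classical (harmonic measure / boundary Hopf-type lemmas on Lipschitz domains), but for fully nonlinear operators one needs the correct barrier: one takes a function of the form $|x - z^*|^{-\gamma} - R^{-\gamma}$ centered at an exterior point $z^*$ (or a cone-adapted power), checks using \eqref{e:struc1} that $P^+_{\Lambda,\lambda}$ of it has the right sign for the chosen exponent $\gamma = \gamma(\Lambda,\lambda,n)$, and uses the Lipschitz exterior cone condition to fit this barrier under $v$ after normalization at the corkscrew point. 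All the other ingredients are standard consequences of Krylov--Safonov Harnack for Pucci-extremal inequalities together with the Harnack chain geometry of Lipschitz domains, so once this barrier step is in place the rest is routine; full details are in \cite{f01}.
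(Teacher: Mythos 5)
The paper offers no proof of this lemma at all: it is imported verbatim as Lemma 2.4 of \cite{f01}. So deferring to \cite{f01} is exactly what the paper does, and to that extent your proposal matches it. The problem is with the sketch you attach, which has a genuine gap at its central step.

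You work at the scale $\rho = |x-z| = d(x,\partial D_{L,1})$ adapted to the point $x$. At that scale both $x$ and the corkscrew point $A_{c\rho}(z)$ lie at distance comparable to $\rho$ from the boundary and within $C\rho$ of each other, so your ``Carleson'' step $u(x)\le C\,u(A_{c\rho}(z))$ and your ``reverse Carleson'' step $v(A_{c\rho}(z))\le C\,v(x)$ are each just interior Harnack chains of uniformly bounded length; they are correct but carry no boundary information. All of the difficulty is then concentrated in the middle step, $u(A_{c\rho}(z))\le C\,v(A_{c\rho}(z))$, which you justify ``by a Harnack chain from $A_{c\rho}(z)$ to $e_n/2$.'' This is where the argument fails: a Harnack chain joining a point at distance $\sim\rho$ from $\partial D_{L,1}$ to the fixed interior point $e_n/2$ requires on the order of $\log(1/\rho)$ balls, so the constant it produces is $C^{\log(1/\rho)}\sim\rho^{-N}$ and degenerates as $x\to\partial D_{L,1}$. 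Indeed no scale-independent comparison $u(A_{c\rho}(z))\approx u(e_n/2)=1$ can possibly hold, since $u(A_{c\rho}(z))\to0$ as $\rho\to0$. The scale-uniform comparability of $u$ and $v$ at corkscrew points over all scales is essentially the content of the boundary Harnack principle, not an input to it; in \cite{f01}, as in the classical Caffarelli--Fabes--Mortola--Salsa scheme, it is obtained by a genuine iteration (for instance, comparing $u$ with the solution having boundary data on a surface ball and establishing a doubling property, or proving geometric decay of $\sup(u-tv)_+$), combined with the true Carleson estimate $\sup_{B_{r/2}(z)\cap D}u\le C\,u(A_r(z))$, which controls $u$ at points much closer to the boundary than $r$ and is not a bounded Harnack chain. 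Relatedly, the ``main obstacle'' you single out --- the lower growth barrier built on the exterior cone --- is a genuine ingredient and your barrier is the right one, but it is not the step your outline actually leaves unproved.
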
  

In our situation, it will be more convenient to apply the following slight variation of Lemma \eqref{l:classic}.
\begin{lemma}  \label{l:classic2}
 Let  $F(D^2u, \nabla u)=F(D^2 v, \nabla v)=0$ (in the viscosity sense)  in $D_{L,1}$, with $u,v\geq 0$, and $u=v=0$ on $\partial D_{L,1} \cap B_{3/4}$.
 If $u(x)=v(x)=1$ for some $x \in D_{L,1/2}$, then $u/v \leq C$ in $D_{L,1/2}$. 
\end{lemma}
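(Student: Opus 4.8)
The goal is to deduce Lemma~\ref{l:classic2} from Lemma~\ref{l:classic}, so the whole point is to convert the normalization ``$u(x)=v(x)=1$ for some $x\in D_{L,1/2}$'' into the normalization ``$u(e_n/2)=v(e_n/2)=1$'' used in Lemma~\ref{l:classic}. The natural approach is a two-step chain: first compare the values of $u$ (and $v$) at $x$ with the values at $e_n/2$ using interior Harnack along a Harnack chain, and then apply Lemma~\ref{l:classic} to the suitably rescaled functions.

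\emph{Step 1: Harnack chaining.} Both $u$ and $v$ are nonnegative viscosity solutions of the homogeneous equation $F(D^2w,\nabla w)=0$ in $D_{L,1}$, hence satisfy the interior Harnack inequality (valid for solutions of uniformly elliptic fully nonlinear equations with the structural conditions \eqref{e:struc1}, \eqref{e:construc2}; see Caffarelli--Cabr\'e / \cite{CIL}) on any ball compactly contained in $D_{L,1}$. Since $L\le \eta$ is small (or in any case a fixed Lipschitz constant), both $x$ and $e_n/2$ lie in $D_{L,1/2}$ at a definite distance from $\partial D_{L,1}$, and they can be joined by a chain of finitely many balls $B_{r}(x_j)\subset\subset D_{L,3/4}$ with $r$ and the number of balls depending only on $L$ and $n$. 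Applying Harnack along this chain gives constants $0<c_0\le C_0$, depending only on $\Lambda,\lambda,M,L,n$, such that
\[
c_0\,u(e_n/2)\le u(x)\le C_0\,u(e_n/2),\qquad c_0\,v(e_n/2)\le v(x)\le C_0\,v(e_n/2).
\]
In particular, since $u(x)=v(x)=1$, the values $u(e_n/2)$ and $v(e_n/2)$ are both pinched between $C_0^{-1}$ and $c_0^{-1}$.

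\emph{Step 2: Renormalize and apply Lemma~\ref{l:classic}.} Set $\tilde u = u/u(e_n/2)$ and $\tilde v = v/v(e_n/2)$. By the homogeneity \eqref{e:construc2} of $F$, these still solve $F(D^2\tilde u,\nabla\tilde u)=F(D^2\tilde v,\nabla\tilde v)=0$ in $D_{L,1}$, they are nonnegative, vanish on $\partial D_{L,1}\cap B_{3/4}$, and satisfy $\tilde u(e_n/2)=\tilde v(e_n/2)=1$. Lemma~\ref{l:classic} therefore yields $\tilde u/\tilde v\le C(\Lambda,\lambda,M,L,n)$ in $D_{L,1/2}$, i.e.
\[
\frac{u}{v}\le C\,\frac{u(e_n/2)}{v(e_n/2)}\le C\,c_0^{-1}C_0
\]
in $D_{L,1/2}$, using the two-sided bounds from Step~1 to control the ratio $u(e_n/2)/v(e_n/2)$. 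Renaming the constant gives the claimed estimate $u/v\le C$ in $D_{L,1/2}$.

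\emph{Main obstacle.} There is no deep difficulty here — the content is entirely in Lemma~\ref{l:classic}, which is quoted. The only point requiring a little care is Step~1: one must check that for $x$ ranging over all of $D_{L,1/2}$ (in particular $x$ close to the ``lateral'' boundary but still inside $D_{L,1/2}$, which forces $x_n$ bounded below in terms of $L$) there is a uniform Harnack chain inside $D_{L,3/4}$ connecting $x$ to $e_n/2$ with length and ball-count depending only on $L$ and $n$; this is where the Lipschitz geometry (and smallness of $L$, if one wants clean constants) is used. One should also note that the Harnack inequality must be applied to viscosity solutions, which is standard under \eqref{e:struc1}. Everything else is bookkeeping with the homogeneity of $F$.
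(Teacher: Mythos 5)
There is a genuine gap in Step 1. The point $x$ is only assumed to lie in $D_{L,1/2}$; nothing prevents it from being arbitrarily close to the graphical part of $\partial D_{L,1}$, where $u$ and $v$ vanish. Your assertion that $x$ lies ``at a definite distance from $\partial D_{L,1}$'' (and the remark that proximity to the lateral boundary forces $x_n$ to be bounded below) is false, so the interior Harnack chain from $x$ to $e_n/2$ has no uniform bound on its length, and the constants $c_0, C_0$ degenerate as $d(x,\partial D_{L,1})\to 0$. Worse, the lower bound $u(x)\ge c_0\,u(e_n/2)$ is simply not available near the boundary: $u(x)/u(e_n/2)$ tends to $0$ as $x$ approaches $\partial D_{L,1}$, so no chaining argument can pinch $u(e_n/2)$ and $v(e_n/2)$ individually between fixed constants. (A Carleson-type estimate gives $u(x)\le C_0\,u(e_n/2)$, hence $u(e_n/2)\ge 1/C_0$, but the needed upper bound on $u(e_n/2)$ in terms of $u(x)=1$ is exactly what fails.)

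The repair --- and the paper's actual argument --- is to control only the \emph{ratio} $u(e_n/2)/v(e_n/2)$, not the two values separately, and to do so using Lemma \ref{l:classic} itself, which holds uniformly up to the boundary and hence at $x$ wherever it sits in $D_{L,1/2}$. Set $\tilde u=u/u(e_n/2)$ and $\tilde v=v/v(e_n/2)$ as in your Step 2, and apply Lemma \ref{l:classic} with the roles of the two functions swapped to get $\tilde v/\tilde u\le C$ on $D_{L,1/2}$; evaluating at $x$ and using $u(x)=v(x)=1$ gives $u(e_n/2)/v(e_n/2)\le C$. A second application of Lemma \ref{l:classic} in the original order then yields, for every $y\in D_{L,1/2}$, $u(y)/v(y)\le C\,u(e_n/2)/v(e_n/2)\le C^2$. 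Your Step 2 is correct once the ratio $u(e_n/2)/v(e_n/2)$ is controlled in this way; it is only the mechanism for controlling it that must be changed from interior Harnack chaining to the boundary Harnack inequality.
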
 

\begin{proof}
 We apply Lemma \ref{l:classic} to $\tilde{v}, \tilde{u}$ in place of $u, v$, where
 \[
  \tilde{u}:= \frac{u(x)}{u(e_n/2)} \quad \text{ and } \quad \tilde{v}:= \frac{v(x)}{v(e_n/2)}, 
 \]
and obtain 
\[
  C \geq \frac{\tilde{v}(x)}{\tilde{u}(x)}   = \frac{v(x)}{u(x)} \frac{u(e_n/2)}{v(e_n/2)} = \frac{u(e_n/2)}{v(e_n/2)}. 
\]
Now apply Lemma \ref{l:classic} again to $\tilde{u}, \tilde{v}$ in the opposite order to get that for any $y \in D_{L, 1/2}$,
\[
C \geq \frac{\tilde{u}(y)}{\tilde{v}(y)}   = \frac{u(y)}{v(y)} \frac{v(e_n/2)}{u(e_n/2)} \geq \frac{u(y)}{v(y)} \frac{1}{C}.
\]
This concludes the proof.
\end{proof}

We will also need the following lemma.
\begin{lemma}  \label{l:remainder}
 Let $v$ satisfy $-1 \leq F(D^2 v, \nabla v) \leq 1$ in $D_{L,R}$ (in the viscosity sense), where $R \leq 1$. If $v=h+w$ where $w$ solves 
 \[
  \begin{cases}
   F(D^2 w, \nabla w)=0 &\text{ in } D_{L,R} \\
   w=v &\text{ on } \partial D_{L,R},
  \end{cases}
 \]
 then there exists a constant $C = C(n, \lambda, \Lambda, M)$ such that 
 \[
  |h| \leq CR^2.
 \]
\end{lemma}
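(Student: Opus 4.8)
The plan is to bound $h = v - w$ by constructing explicit barriers for the difference. Since $v$ and $w$ agree on $\partial D_{L,R}$, we have $h = 0$ there, so $h$ attains its extrema inside $D_{L,R}$. The key fact to exploit is the viscosity comparison property quoted right before the lemma: since $F(D^2 v, \nabla v) \geq -1$ and $F(D^2 w, \nabla w) = 0$, we get
\[
P_{\Lambda,\lambda}^-(D^2(v - w)) - M|\nabla v - \nabla w| \leq 0 - (-1) = 1
\]
in the viscosity sense, i.e. $P_{\Lambda,\lambda}^-(D^2 h) - M|\nabla h| \leq 1$; and symmetrically, using $F(D^2 v, \nabla v) \leq 1$, $P_{\Lambda,\lambda}^+(D^2 h) + M|\nabla h| \geq -1$, which is the same as $P_{\Lambda,\lambda}^-(D^2(-h)) - M|\nabla(-h)| \leq 1$. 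So it suffices to bound $h$ from above (the bound from below then follows by applying the same argument to $-h$).

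Next I would build a barrier $\Phi$ on $B_R$ (which contains $D_{L,R}$) with $\Phi \geq 0$ on $\partial B_R \supset$ the relevant part of the boundary, $\Phi \leq CR^2$, and $P_{\Lambda,\lambda}^-(D^2 \Phi) - M|\nabla \Phi| \geq 1$ in the classical sense. A natural choice is a quadratic like $\Phi(x) = \frac{A}{2}(R^2 - |x|^2) + B R^2$ or, to handle the first-order $M|\nabla \Phi|$ term cleanly on a domain of diameter $\lesssim R$, something of the form $\Phi(x) = C(R^2 - e^{-\mu x_1}\,\text{const})$-type exponential barrier; but since $R \leq 1$ the simplest route is $\Phi(x) = A(R^2 - |x - x_0|^2)$ for a suitable center, giving $D^2\Phi = -2A\,\mathrm{Id}$, hence $P_{\Lambda,\lambda}^-(D^2\Phi) = -2An\lambda$ — wrong sign. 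So instead take $\Phi$ concave-down is wrong; I want $P^-$ large positive, so I need $D^2\Phi$ to have large positive eigenvalues: take $\Phi(x) = A|x - x_0|^2 - A R^2$ where $x_0$ is chosen so $D_{L,R} \subset B_{CR}(x_0)$, so that $D^2 \Phi = 2A\,\mathrm{Id}$ gives $P_{\Lambda,\lambda}^-(D^2\Phi) = 2An\lambda \geq 2A\lambda$, while $|\nabla \Phi| = 2A|x - x_0| \leq CAR \leq CA$. Choosing $A$ a large enough constant depending on $\lambda, M, n$ makes $P_{\Lambda,\lambda}^-(D^2\Phi) - M|\nabla\Phi| \geq 2A\lambda - CAMR \geq 1$ (using $R \leq 1$ and $A$ large). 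Meanwhile on $D_{L,R}$, since $\Phi = A(|x-x_0|^2 - R^2) \leq 0 \leq \ldots$ — wait, I need $\Phi \geq h$ on $\partial D_{L,R}$; there $h = 0$, and $\Phi \leq 0$ on $B_R(x_0) \cap \{|x - x_0| \le R\}$, which is the wrong inequality. The fix: flip to $\Phi(x) = A(R^2 - |x - x_0|^2) + $ correction, but that has the wrong Hessian sign for $P^-$.

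The resolution — and this is the step I expect to be the main obstacle — is that the first-order term $M|\nabla h|$ with the "bad" sign ($-M|\nabla h|$ appearing on the favorable side) forces one to use an exponential rather than a quadratic barrier. The standard device: on a domain contained in a slab $\{0 < x_1 < d\}$ with $d \lesssim R$, take $\Phi(x) = K(e^{\mu d} - e^{\mu x_1})$ for constants $\mu, K > 0$. Then $D^2\Phi = -K\mu^2 e^{\mu x_1} e_1 \otimes e_1$, so $P^-_{\Lambda,\lambda}(D^2\Phi) = -\Lambda K \mu^2 e^{\mu x_1}$ — still the wrong sign. Use instead $\Phi(x) = K(e^{\mu x_1} - 1)$ for $x_1 \in [0, d]$: then $D^2 \Phi = K\mu^2 e^{\mu x_1} e_1\otimes e_1$ has one positive eigenvalue, so $P^-_{\Lambda,\lambda}(D^2\Phi) = \lambda K \mu^2 e^{\mu x_1} \geq \lambda K \mu^2$, and $|\nabla\Phi| = K\mu e^{\mu x_1} \leq K\mu e^{\mu d}$. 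Then $P^-(D^2\Phi) - M|\nabla\Phi| \geq K\mu^2\lambda - MK\mu e^{\mu d} \geq K\mu(\mu\lambda - M e^{\mu d})$; picking $\mu$ large (depending only on $\lambda, M$ and the fact $d \le 2R \le 2$, so $e^{\mu d}$ is a constant) makes the bracket a positive constant, then picking $K \sim C/\mu$ makes the whole thing $\geq 1$. And $\Phi \geq 0$ on the slab with $\Phi \leq K(e^{\mu d} - 1) \leq CR$ if we instead scale the slab so $d = $ the actual width of $D_{L,R}$ in the $x_n$-direction, which is $\leq 2R$; but to get $CR^2$ rather than $CR$ we should rescale: let $\tilde v(y) = R^{-2} v(Ry)$ on $D_{L,1}$, note $F(D^2\tilde v, \nabla\tilde v)(y) = R^{-1}\cdot R^2 \cdot$ ... using homogeneity \eqref{e:construc2}, $F(D^2\tilde v(y), \nabla \tilde v(y)) = F(D^2 v(Ry), R\nabla v(Ry)) = \ldots$; careful tracking of \eqref{e:construc2} shows the rescaled equation still satisfies $|F| \le 1$ up to harmless constants, reducing to $R = 1$ where the exponential barrier gives $|h| \leq C$, hence $|h| \leq CR^2$ after scaling back. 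Then by the comparison principle for viscosity solutions (comparing $h$ with $\Phi$ on $D_{L,R}$, both satisfying the appropriate Pucci inequality with the maximum principle for $P^-_{\Lambda,\lambda} - M|\nabla\cdot|$), we conclude $h \leq \Phi \leq CR^2$, and likewise $-h \leq CR^2$, finishing the proof.
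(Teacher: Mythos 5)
There is a genuine gap here, and it is the direction of the comparison principle. To bound $h$ from above you invoke the inequality $P^-_{\Lambda,\lambda}(D^2 h) - M|\nabla h| \leq 1$ together with a barrier $\Phi \geq 0$ on the boundary satisfying $P^-_{\Lambda,\lambda}(D^2\Phi) - M|\nabla\Phi| \geq 1$, and conclude $h \leq \Phi$. But that inequality makes $h$ a \emph{supersolution} and $\Phi$ a \emph{subsolution} of the same equation; a subsolution sitting above a supersolution on the boundary yields nothing in the interior (comparison runs the other way), and a supersolution property can only ever produce \emph{lower} bounds via the minimum principle. Concretely, $h = N(R^2 - |x|^2)$ satisfies $P^-_{\Lambda,\lambda}(D^2 h) - M|\nabla h| = -2Nn\Lambda - 2MN|x| \leq 1$ and vanishes on $\partial B_R$ for every $N>0$, so that single inequality plus zero boundary data is consistent with $h$ being arbitrarily large. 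The upper bound must come from the other half of \eqref{e:comparison}, namely $P^+_{\Lambda,\lambda}(D^2 h) + M|\nabla h| \geq -1$, which makes $h$ a subsolution to be dominated by a \emph{concave} classical supersolution $G \geq 0$ with $P^+_{\Lambda,\lambda}(D^2 G) + M|\nabla G| \leq -1$. This is exactly the quadratic $G = \frac{1}{\lambda n}(R^2 - |x|^2)$ that you computed and discarded as having ``the wrong Hessian sign for $P^-$'' --- the sign is only wrong because you paired it with the wrong Pucci operator. Your repeated finding that every candidate barrier has either the wrong PDE sign or the wrong boundary sign is precisely the symptom of this mismatch. (Equivalently: your observation that $P^-_{\Lambda,\lambda}(D^2(-h)) - M|\nabla(-h)| \leq 1$ was the right move, but it should then be used to bound the supersolution $-h$ from \emph{below} by a convex subsolution $\Psi \leq 0$, i.e.\ $\Psi = -G$.)

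Two smaller points. First, the quadratic barrier only closes when $MR \lesssim \lambda n$, because of the first-order term; the paper handles the complementary regime $R \geq \lambda n/(2M)$ by noting that $R$ is then bounded below by a constant depending only on $n,\lambda,M$, so a concave exponential barrier $e^{2S} - e^{S(x_1+1)}$ giving $|h| \leq C$ already gives $|h| \leq CR^2$. This removes any need for your final rescaling step, which is itself problematic: under $v \mapsto R^{-2}v(R\,\cdot)$ the Hessian and the gradient scale by different powers of $R$, so the $1$-homogeneity \eqref{e:construc2} does not return an operator of the same form with right-hand side still bounded by $1$ without an a priori gradient bound; the paper's two-case split avoids this entirely.
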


\begin{proof} 
  From earlier remarks we have that $h = v - w$ satisfies 
  \begin{equation} \label{e:comparison}
	  P^-_{\Lambda, \lambda}(D^2 h) - M |\nabla h| \leq 1 \qquad -1 \leq P^+_{\Lambda, \lambda}(D^2 h) + M |\nabla h|
  \end{equation}
  in the viscosity sense. 
  
  Assume first that $R \leq \frac{\lambda n}{2 M}$: we use 
 \[
 G(x):= \frac{ R^2}{\lambda n} \left(1 - \frac{|x|^2}{R^2}\right)
 \] 
  as an explicit barrier on $B_{R}$.  We have that 
 \[
  P^+_{\Lambda, \lambda}(D^2 G) + M |\nabla G| =  - 2 + 2 \frac{M R}{\lambda n} \leq -1.  
 \] 
 Since $h=0$ on $\partial D_{L,R}$ and $G \geq 0$ there, using the comparison principle with the right inequality in \eqref{e:comparison} we obtain that $h \leq G \leq C R^2$. 
 
 On the other hand, if $R \geq \frac{\lambda n}{2 M} \geq c(n, \lambda, M)$, we may use a barrier of the form
 \[
	 G(x) =  e^{2 S} - e^{S (x_1 + 1)}
 \]
 where $S \geq 0$. Then $G \geq 0$ on $D_{L, 1}$, and
 \[
	 P^+_{\Lambda, \lambda}(D^2 G) + M |\nabla G| = A e^{S (x_1 + 1)}[- S^2 \lambda + S M] \leq S e^{2 S}[ - S \lambda + M ] \leq -1
 \]
 if we select $ S = \max\{\frac{2 M}{\lambda}, 1\}$.
 This gives $h \leq G \leq  e^{2S} \leq C(n, \lambda, M) \leq C R^2$ on $D_{L, R}$ after applying the comparison principle.
 
 The opposite inequality follows by considering $-h$ instead, using the other viscosity inequality.
\end{proof}

\subsection{Growth estimates}

\begin{lemma} \label{l:barrier} There is a number $\epsilon = \epsilon(n, \lambda, \Lambda, M) > 0$ such that for every $\eta < \eta_0(n, \lambda, \Lambda, M)$, if $u \geq 0 $ on $D_{L, 1}$, $u(e_n/2) \geq 1$, $u = 0$ on $\partial D_{L, 1}\cap B_1$,
	\[
		- 1 \leq F(D^2 u, \nabla u) \leq \epsilon
	\]
	in the viscosity sense, and $L \leq \eta$, then
	\[
		u(x) \geq c_* (x_n - \eta)
	\]
	on $D_{L, 1/16}$ for a $c_* = c_*(n, \lambda, \Lambda, M)$ (which does not depend on $\eta$).
\end{lemma}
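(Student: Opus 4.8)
\emph{Plan of proof.} The plan is to first produce a lower bound for $u$ at interior points at a fixed scale via the Harnack inequality, and then to propagate it down to $\partial D_{L,1}$ by comparison with an explicit barrier, carried out scale by scale near each boundary point. Two observations make this work: (i) the claimed inequality is vacuous when $x_n\le\eta$, so one never needs to descend below scale $\eta$; and (ii) on the scales that matter the relevant homogeneity exponent $\gamma_L$ of the interior Lipschitz cone is within $O(L)$ of $1$, so growth like $d(x,\partial D_{L,1})^{\gamma_L}$ is, on those scales, essentially linear. For Step~1 I would note that since $-1\le F(D^2u,\nabla u)\le\epsilon\le 1$, $u$ is simultaneously a sub- and a supersolution of the extremal equations with right hand side bounded by $1$, so the Krylov--Safonov Harnack inequality with right hand side applies on any $B_{2\rho}(x)\subset D_{L,1}$. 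Since $B_{1/4}(e_n/2)\subset D_{L,1}$ for $\eta$ small, taking $\rho$ a small dimensional multiple of the reciprocal square root of the Harnack constant gives $u\ge c_0>0$ on $B_\rho(e_n/2)$ with $c_0=c_0(n,\lambda,\Lambda,M)$; chaining this toward the origin along a Harnack chain whose ball radii are comparable to the distance to $\partial D_{L,1}$ (a dimensionally bounded number of balls, as distances range only over a bounded interval), and controlling the additive errors at each step through the splitting $u=w+h$ with $F(D^2w,\nabla w)=0$ and $|h|\le Cr^2$ from Lemma~\ref{l:remainder}, yields $u\ge c_1>0$ on a fixed interior neighbourhood of $\xi+r_0e_n$ for every boundary point $\xi=(\xi',g(\xi'))$ with $|\xi'|\le 1/16$, where $r_0$ is a fixed scale.

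For Step~2, the heart of the argument, I would let $\Psi_L$ be the $P^-_{\Lambda,\lambda}$--harmonic function on the cone $\mathcal{C}_L=\{y:y_n>L|y'|\}$, homogeneous of some degree $\gamma_L$, positive inside and vanishing on $\partial\mathcal{C}_L$ (this exists, with $\gamma_L=1+O(L)$ since $\mathcal{C}_L$ degenerates to a half-space as $L\to 0$). Because $g$ is $L$-Lipschitz, $\xi+\mathcal{C}_L\subset D_{L,1}$ above every boundary point, so $\Psi_L(\cdot-\xi)$, extended by $0$ outside the cone, vanishes on $\partial D_{L,1}$ near $\xi$. I would then prove by induction on $k$ that $u(x)\ge c^{(k)}\,d(x,\partial D_{L,1})^{\gamma_L}$ whenever $x\in B_{1/8}$ and $d(x,\partial D_{L,1})\le r_k:=2^{-k}$, the base case ($2^{-k_0}\sim r_0$) coming from the step below with Step~1 supplying the boundary data. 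To pass from $k-1$ to $k$: fix $\xi\in\partial D_{L,1}$ with $|\xi'|\le 1/16$, rescale $D_{L,1}\cap B_{r_k}(\xi)$ to unit size, and compare the rescaled $u$ with the barrier $\Psi_L(\cdot-\xi)-\Gamma_k$, where $\Gamma_k\ge 0$ is a quadratic or exponential correction of small amplitude (in the spirit of Lemma~\ref{l:remainder}) chosen so that, \emph{despite} the first-order term $M|\nabla\cdot|$ and the rescaled right hand side, one still has $F\ge\epsilon$ for this barrier; in the rescaled picture $\Gamma_k$ has amplitude $O(r_kM)+O(r_k^{2-\gamma_L}/c^{(k)})$, which is negligible. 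On $\partial(D_{L,1}\cap B_{r_k}(\xi))$ the barrier is $\le 0\le u$ on the graph portion and near where $\Psi_L$ vanishes, and $\le u$ on the rest of $\partial B_{r_k}(\xi)$ by the scale-$(k-1)$ estimate (using that $\Psi_L$ vanishes at the same linear rate as $d(\cdot,\partial D_{L,1})$ near the cone surface). The viscosity comparison principle applied to $w=u-(\text{barrier})$, which satisfies $P^-_{\Lambda,\lambda}(D^2w)-M|\nabla w|\le F(D^2u,\nabla u)-\epsilon\le 0$, then gives $u\ge(\text{barrier})$ on $D_{L,1}\cap B_{r_k}(\xi)$; reading this off at height $\sim r_k$ above $\xi$ yields the scale-$k$ estimate with $c^{(k)}=(1-O(r_k))c^{(k-1)}$. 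Since $\sum_k r_k<\infty$, the product $\prod_k(1-O(r_k))$ is positive, so $c^{(k)}\ge c_*:=\tfrac12 c_1\prod(1-O(r_k))>0$ for all $k$, with $c_*=c_*(n,\lambda,\Lambda,M)$. This is where $\epsilon$ must be a fixed small constant: at the top scale $r_{k_0}$ the correction amplitude $\sim\epsilon r_{k_0}^2$ must already be small compared with $c^{(k_0)}r_{k_0}$.

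For Step~3: if $x\in D_{L,1/16}$ with $x_n\le\eta$ the conclusion is trivial, its right side being $\le 0$. If $x_n>\eta$, let $\xi$ be the nearest point of $\partial D_{L,1}$ (which lies on the graph, since $B_{1/16}$ is far from $\partial B_1$); then $d:=d(x,\partial D_{L,1})\ge (x_n-|g(x')|)/\sqrt{1+L^2}\ge x_n/4$ because $|g(x')|\le L/16\le\eta/16<x_n$, and $d\le 1$, and $x-\xi$ is within $O(L)$ of $e_n$ so $\Psi_L(\widehat{x-\xi})\gtrsim 1$. Applying the scale-$k$ estimate with $2^{-k}\sim d$ (in range since $d>\eta/4$) gives $u(x)\gtrsim c_*\,d^{\gamma_L}=c_*\,d\cdot d^{\gamma_L-1}$, and since $\gamma_L-1=O(L)$, $L\le\eta$, $d\ge x_n/4>\eta/4$, we get $d^{\gamma_L-1}=d^{O(L)}\ge(\eta/4)^{O(L)}\ge 1-O(\eta\log\tfrac1\eta)\ge\tfrac12$ for $\eta$ small; hence $u(x)\gtrsim c_*\,d\ge\tfrac{c_*}{8}x_n\ge\tfrac{c_*}{8}(x_n-\eta)$, and relabelling $c_*$ finishes the proof. (Points at distance $\gtrsim r_0$ from $\partial D_{L,1}$ are handled directly by Step~1 and the Harnack inequality.)

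\textbf{Main obstacle.} The genuinely delicate point is the barrier in Step~2: the bare cone sub-solution $\Psi_L$ is \emph{not} a subsolution of $F(D^2\phi,\nabla\phi)\ge\epsilon$, because near $\partial D_{L,1}$, where $\Psi_L$ vanishes, its second-order part degenerates and is dominated by $-M|\nabla\Psi_L|<0$; repairing this requires adding a convex correction, but every such correction is positive and so threatens the needed inequality $\phi\le u=0$ on the graph. Reconciling these two demands is precisely what forces the whole argument to be localised to small scales $r_k$, on which the effective first-order coefficient $r_kM$ and the correction amplitude $\sim\epsilon r_k^2$ are both negligible against the $\sim c^{(k)}r_k$ growth --- and this is exactly why $\epsilon$ has to be a fixed small constant and $\eta$ (hence $L$) has to be small, so that $\gamma_L$ stays close to $1$. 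A secondary technical point is the matching of boundary data between consecutive scales in the thin transition region near $\partial D_{L,1}$, which works only because $\Psi_L$ and $d(\cdot,\partial D_{L,1})$ vanish at the same rate there.
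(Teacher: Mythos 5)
Your high-level skeleton (interior lower bound via Harnack, propagation toward the boundary by comparison with explicit subsolutions, plus the observation that the claim is vacuous for $x_n\le\eta$) matches the paper's, but your Step 2 replaces the easy part with a much harder one, and your own observation (i) is precisely what lets one skip it. The paper never approaches $\partial D_{L,1}$ at all: after one application of Harnack near $e_n/2$ and one annulus comparison to get $u\ge c_0$ on a fixed interior region, it compares $u$ on the annulus $B_{1/4-\eta}(x)\setminus B_{1/32}(x)$, $x=(x',1/4)$ --- which lies inside $\{x_n\ge\eta\}\subset D_{L,1}$ --- with the explicit radial barrier built from $\phi(y)=|y|^{-q}$, for which $P^-_{\Lambda,\lambda}(D^2\phi)-M|\nabla\phi|\ge 1$ when $q$ is large, hence $F(D^2\phi,\nabla\phi)\ge 1\ge\epsilon$ by \eqref{e:struc1} and \eqref{e:construc2}. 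The barrier vanishes linearly at the outer sphere, which is tangent to $\{x_n=\eta\}$ from above, so the comparison directly yields $u(x',t)\ge c_1(t-\eta)$. No homogeneous cone solution, no iteration over scales, and no matching of boundary data on the graph are needed; this is exactly where the $\eta$-offset in the conclusion comes from.

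As written, your Step 2 has concrete gaps. (a) The correction $\Gamma_k$ is claimed to have relative amplitude $O(Mr_k)$, which presumes $|\nabla\Psi_L|\lesssim r_k^{\gamma_L-1}$ on all of $B_{r_k}(\xi)\cap\mathcal{C}_L$; but $\Psi_L$ solves a Pucci equation in a Lipschitz cone and is in general only H\"older up to the lateral boundary, so $|\nabla\Psi_L|$ can blow up there like $d(\cdot,\partial\mathcal{C}_L)^{\alpha-1}$, and a bounded-Hessian correction cannot then absorb $-M|\nabla\Psi_L|$ while staying nonpositive on the graph. You name this as the main obstacle but do not resolve it. (b) The quantitative bound $\gamma_L=1+O(L)$ is asserted without proof and is essential: Step 3 needs $(\gamma_L-1)\log(1/\eta)\to 0$, and mere continuity $\gamma_L\to 1$ does not suffice. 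Both (a) and (b) are at least as hard as the lemma itself. (c) In Step 1, the Harnack inequality with right-hand side carries additive errors of order $r_j^2\|f\|_\infty$ with $\|f\|_\infty\le 1$ (only the upper bound on $F(D^2u,\nabla u)$ is $\epsilon$; the lower bound is $-1$), so chaining over balls of geometrically decreasing radii compounds a multiplicative loss against errors that do not shrink fast enough, and Lemma \ref{l:remainder} does not obviously repair this at the larger radii; the paper avoids the issue by using the one-sided annulus barrier, which only requires $F(D^2u,\nabla u)\le\epsilon$.
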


\begin{proof}
	We will show this using an explicit estimate with a barrier. The barrier argument proceeds in two steps, but they use the same function.
	
	Set $\phi(x) = |x|^{-q}$. 
		 Direct computation shows that for $q$ sufficiently large in terms of $M$, $\lambda$, and $\Lambda$, we have that
	\[
		P^{-}_{\lambda, \Lambda}(D^2 \phi) - M |\nabla \phi| \geq 1
	\]
	for $|x|\leq 1$. Fix $q$ to be such a value, this implies that $F(D^2 \phi, \nabla \phi) \geq 1$.
	
	From the Krylov-Safonov Harnack inequality, we have that
	\[
		\inf_{B_{\kappa}(e_n/2)} u \geq c \sup_{B_{\kappa}(e_n/2)} u - C \kappa^2 \geq c
	\]
	for a small $\kappa \ll \frac{3}{8}$, $c$ depending only on the ellipticity constants. Consider the barrier function
	\[
		h(x) = c \frac{\phi(x - e_n/2) - (3/8)^{-q}}{\kappa^{-q} - (3/8)^{-q}}
	\]
	defined on the annulus $A = B_{3/8}(e_n/2) \setminus B_{\kappa}(e_n/2)$. On the outer boundary, we have that $h = 0$, while on the inner boundary, $h = c \leq u$. On $A$, we have
	\[
		F(D^2 h, \nabla h) \geq P^{-}_{\lambda, \Lambda}(D^2 h) - M |\nabla h| \geq \frac{c}{\kappa^{-q} - (3/8)^{-q}} := \epsilon_1(n, \lambda, \Lambda, M).
	\]
	So long as $\epsilon < \epsilon_1$ (and $\eta_0$ is small enough so that $A \subset D_{L, 1}$), we may apply the comparison principle to $h$ and $u$ to obtain $u \geq h$ on $A$. In particular, take any point $y$ a distance at most $\frac{1}{32}$ from a point $z$ on the region $D = \{(x', x_n): |x_n - \frac{1}{2}| \leq \frac{1}{4}, |x'| < \frac{1}{10}  \}$; then
	\[
		|y - e_n/2| \leq |y - z| + |z - e_n/2| \leq \frac{1}{32} + \sqrt{(\frac{1}{4})^2 + (\frac{1}{16})^2} \leq \frac{11}{32} < \frac{3}{8}.
	\]
	Hence $u \geq h \geq c_0$ at any such point. To summarize: at any $x \in D$, $u \geq c_0$ on $B_{1/32}(x)$.
	
	Now we apply a similar argument around any $x \in D$ with $x_n = \frac{1}{4}$, except slightly more carefully. Fix $\eta$, and note that $D_{L, 1/16}$ contains the large region $B_{1/16} \cap \{x_n \geq \eta \}$. Define $ r = \frac{1}{4} - \eta$, the annulus $A_x = B_{r}(x)\setminus B_{\frac{1}{32}(x)}$ contained within this region, and the barrier function
	\[
		h_x(y) = c_0 \frac{\phi(y - x) - (r)^{-q}}{(1/32)^{-q} - (r)^{-q}}
	\]
	defined on $A_x$. As before, $h_x = 0$ on the outer boundary, $h_x = c_0 \leq u$ on the inner boundary, and
	\[
		F(D^2 h_x, \nabla h_x)\geq \frac{c_0}{(1/32)^{-q} - (r)^{-q}} \geq \frac{c_0}{(1/32)^{-q}}  := \epsilon_2(n, \lambda, \Lambda, M).
	\]
	If $\epsilon < \epsilon_2$ (which does not depend on $\eta$), we have that $u \geq h_x$ on $A_x$. Using the explicit form of $h_x$,
	\[
		u(x', t) \geq h_x(x', t) \geq c_1 [r - (x_n - t)] = c_1[t - \eta]
	\]
	for $t \in (\eta, \frac{1}{4} - \frac{1}{32})$, where $c_1$ can be taken independent of $\eta$.
	
	So far, we have shown that for any $x'$ with $|x'|\leq \frac{1}{16}$ and any $ t \in (\eta, \frac{1}{4} - \frac{1}{32})$,
	\[
		u(x', t) \geq c_1[t - \eta].
	\]
	For $t \leq \eta$, this inequality remains true automatically. In particular, this means that it  holds for all $(x', t) \in D_{L, 1/16}$, giving the conclusion.
\end{proof}

\begin{lemma}\label{l:growthnonlinear} Let $\beta \in (1, 2)$. Then there are constants $\epsilon, \eta > 0$ such that if $u \geq 0 $ on $D_{L, 1}$, $u(e_n/2) \geq 1$, $u = 0$ on $\partial D_{L, 1}\cap B_1$,
	\[
	- 1 \leq F(D^2 u, \nabla u) \leq \epsilon
	\]
	in the viscosity sense, and $L \leq \eta$, then
	\[
		u(x) \geq c_1 d^\beta(x, \partial D_{L, 1})
	\]
	for $x\in D_{L, 1/64}$.
\end{lemma}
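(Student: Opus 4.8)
\smallskip
\noindent\emph{Proof strategy.}
The plan is to upgrade the linear growth of Lemma~\ref{l:barrier} — which holds only at unit scale and only above height $\eta$ — into the claimed $d^\beta$ bound by an induction on dyadic scales, in which at each scale the right hand side is peeled off via Lemma~\ref{l:remainder} and the homogeneous remainder is controlled by a growth estimate for solutions of $F(D^2\cdot,\nabla\cdot)=0$. First I would reduce to boundary points: if $x\in D_{L,1/64}$ has $x_n\geq 2\eta$, then Lemma~\ref{l:barrier} gives $u(x)\geq c_*(x_n-\eta)\geq\tfrac{c_*}{2}x_n\geq c\,d(x,\partial D_{L,1})$, and since $\beta>1$ and $d\leq 1$ this is $\geq c_1 d(x,\partial D_{L,1})^\beta$. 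So it suffices to treat $x$ with $x_n<2\eta$, for which $d:=d(x,\partial D_{L,1})<3\eta$ and the nearest boundary point $y$ satisfies $|y|<\tfrac1{32}$.

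Fix such an $x$, recentre at $y$, and fix $\gamma\in(1,\beta)$, a large integer $N=N(\gamma,\beta,n,\lambda,\Lambda,M)$, and scales $\rho_k=\rho_0 N^{-k}$ with $\rho_0\asymp\tfrac1{64}$; let $q_k$ be a corkscrew point of $D_{L,\rho_k}(y)$ on the approximate inner normal at $y$, so $d(q_k,\partial)\asymp\rho_k$. I would prove $u(q_k)\geq c_1\rho_k^\beta$ for all $k$ by induction. While $\rho_k\gtrsim\eta$ this follows from Lemma~\ref{l:barrier} applied on the original $D_{L,1}$ exactly as in the reduction. For the inductive step, Lemma~\ref{l:remainder} on $D_{L,\rho_k}(y)$ gives $u=w+h$ with $F(D^2w,\nabla w)=0$, $w=u\geq 0$ on the boundary, $w=0$ on the graph, and $|h|\leq C\rho_k^2$. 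The essential input is a lower growth estimate for the homogeneous solution $w$,
\[
w(z)\ \geq\ c'\,w(q_k)\Big(\frac{d(z,\partial)}{\rho_k}\Big)^{\gamma}\qquad\text{for }z\in D_{L,\rho_k/2}(y),
\]
valid once $L\leq\eta$ with $\eta=\eta(\gamma)$ small enough. Granting this and evaluating at $z=q_{k+1}$ (where $d(q_{k+1},\partial)/\rho_k\asymp N^{-1}$) yields $w(q_{k+1})\geq c''N^{-\gamma}w(q_k)$; combining with $w(q_j)\geq u(q_j)-C\rho_j^2$ and the inductive hypothesis,
\[
u(q_{k+1})\ \geq\ c''N^{-\gamma}\big(c_1\rho_k^\beta-C\rho_k^2\big)-C\rho_k^2\ \geq\ c_1 N^{-\beta}\rho_k^\beta\ =\ c_1\rho_{k+1}^\beta ,
\]
where the last step uses, first, that $c''N^{-\gamma}/N^{-\beta}=c''N^{\beta-\gamma}\to\infty$ as $N\to\infty$ (so fix $N$ large depending only on $c'',\gamma,\beta$), and second, that the leftover errors are of order $\rho_k^{2-\beta}\to 0$ — this is exactly where $\beta<2$ is used — and hence lie within the resulting slack provided $\eta$ is small. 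The constants are then chosen in the order $\gamma$, $N$, $c_1$, $\eta$ (with $\epsilon$ and an upper bound on $\eta$ inherited from Lemma~\ref{l:barrier}). Running the induction down to $\rho_k\asymp d$ and applying the homogeneous growth estimate once more from $q_k$ to $x$ itself gives $u(x)\geq c_1 d^\beta$, after shrinking $c_1$ if needed.

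The step I expect to be the real obstacle is the homogeneous growth estimate: a nonnegative solution of $F(D^2\cdot,\nabla\cdot)=0$ vanishing on the graph of a Lipschitz function grows at least like $d^\gamma$ for any $\gamma>1$ once the Lipschitz constant is small enough in terms of $\gamma$ — i.e.\ near-flatness of the domain forces near-linear growth — and all constants must be uniform over the dyadic scales. I would prove it from an interior-cone barrier: $D_{L,\rho_k}(y)$ satisfies an interior cone condition at $y$ with a cone whose aperture tends to that of a half-space as $L\to0$, so the associated homogeneous cone-function has homogeneity exponent tending to $1$; turning it into a genuine subsolution requires absorbing the lower-order term $M|\nabla\cdot|$ from \eqref{e:struc1}, which is harmless at a fixed small scale, after which one compares and uses the interior Harnack inequality to pass from the value on the inner sphere to $w(q_k)$. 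It is convenient here that rescaling the homogeneous equation only decreases the constant $M$ in \eqref{e:struc1}, so that Lemma~\ref{l:classic2} and the related estimates apply with scale-independent constants. The remaining work — summing the geometric series of $\rho_k^2$-errors and bookkeeping the constants — is routine.
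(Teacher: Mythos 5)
Your argument is correct in structure but takes a genuinely different route from the paper. The paper never separates off a homogeneous part: it rescales $u$ itself to $u_k(y)=u(x+r_ky)/u(x+r_ke_n/2)$ at the scales $r_k=(64\eta)^k$, checks via the inductive lower bound that the rescaled right-hand side is still in $[-1,\epsilon]$ (this is where $u(x+r_k/2\,e_n)\geq r_k^2$ enters, the analogue of your $\rho_k^\beta\gg\rho_k^2$), and re-applies the \emph{inhomogeneous} barrier Lemma~\ref{l:barrier} at each scale; the fixed loss $c_*/8$ per step against the huge scale ratio $1/(64\eta)$ is what produces an exponent $1-c'\log\eta<\beta$. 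You instead freeze the scale ratio at $1/N$ with $N$ large but independent of $\eta$, peel off the right-hand side with Lemma~\ref{l:remainder} (so the error per scale is $C\rho_k^2$, subordinate to $c_1\rho_k^\beta$ precisely because $\beta<2$), and drive the induction with a near-linear lower growth estimate for nonnegative solutions of the \emph{homogeneous} equation vanishing on a flat Lipschitz graph. Your version buys a cleaner separation of the two mechanisms (flatness $\Rightarrow$ exponent close to $1$; $\beta<2$ $\Rightarrow$ the inhomogeneity is perturbative), and it is closer in spirit to the metatheorem. What it costs is that the homogeneous growth estimate $w(z)\geq c'w(q_k)(d(z)/\rho_k)^\gamma$, which you correctly identify as the real obstacle, is left as an assertion: it is true for operators satisfying \eqref{e:struc1} and your cone-barrier sketch (homogeneous subsolutions of $P^-_{\lambda,\Lambda}$ in cones $\{x_n>L|x'|\}$ with exponent $\gamma(L)\to1$, plus absorption of the drift $M|\nabla\cdot|$ at small scales) is a viable route, but proving it with scale-uniform constants is essentially as much work as the paper's direct iteration — indeed the most elementary proof of that lemma is to run the paper's argument on $w$. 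Two small bookkeeping points: the inductive inequality $c_1(c''N^{-\gamma}-N^{-\beta})\geq 2C\rho_k^{2-\beta}$ requires $\rho_k\leq\rho_*(c_1,N,C,\beta)$ rather than "$\eta$ small" per se; the role of $\eta$ is to push the Lemma~\ref{l:barrier} base case down past $\rho_*$ so the two regimes overlap, which your stated order of constants ($\gamma$, $N$, $c_1$, then $\eta$) does accommodate. Also, at the final scale $\rho_k\asymp d$ the interior Harnack inequality suffices to pass from $q_k$ to $x$; you do not need the boundary growth estimate there.
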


\begin{proof}
	We begin by ensuring that $\eta, \epsilon$ are small enough and applying Lemma \ref{l:barrier} to learn that
	\[
		u(x) \geq c_* [x_n - \eta] \geq \frac{c_*}{2} x_n
	\]
	so long as $x_n \geq 2 \eta$ and $x \in D_{L, 1/16}$. Fix a point $x \in \partial D_{L, 1} \cap B_{1/64}$, and define
	\[
		u_1(y) = \frac{u(x + r_1 y)}{u(x + r_1/2 e_n)}
	\]
	where $r_1 = 64 \eta$. We claim that on $B_1$, $u_1$ satisfies all of the assumptions of Lemma \ref{l:barrier} (using $D'_{L, 1} = (D_{L, 1} - x)/r_1$). Indeed, most of the assumptions follow immediately: $u_1 \geq 0$ on $D'_{L, 1}$ and vanishes on the graphical boundary, has $u_1(e_n/2) = 1$ by construction, and $D'_{L, 1}$ has Lipschitz constant bounded by $L \leq \eta$. The main assumption we must check is that it satisfies the relevant differential inequalities. For this, rescaling gives that (for a $\tilde{F}$ which satisfies the same properties as $F$)
	\begin{equation}\label{e:growth}
	- \frac{r_1^2}{u(x + r_1/2 e_n)} \leq	\tilde{F}(D^2 u_1, \nabla u_1) \leq \frac{r_1^2}{u(x + r_1/2 e_n)} \epsilon,
	\end{equation}
	so we must show that $u(x + r_1/2 e_n) \geq r_1^2$. Note that $x + r_1/2 e_n$ has $n$-th component larger than $2 \eta$, so
	\[
		u(x + r_1/2 e_n) \geq \frac{c_*}{2} (x_n + r_1/2) \geq \frac{c_*}{8} r_1.
	\]
	So long as $c_*/8 > 64 \eta$ this is larger than $r_1^2$, so we may proceed so long as $\eta$ is chosen small enough.
	
	Applying Lemma \ref{l:barrier} to $u_1$ gives that
	\[
		u_1(y) \geq c_*/2 y_n
	\]
	for $y_n \geq 2 \eta$, which translates to
	\[
		u(x', t) \geq c_*/2 (t - x_n) \frac{u(x + r_1/2 e_n)}{r_1} \geq c_*^2/16 (t - x_n)
	\]
	for $t \in (x_n + 2 \eta r_1, x_n + r_1)$. Note that from our choices, $x_n + r_1 \geq 2 \eta$.
	
	We may continue to apply Lemma \ref{l:barrier} to $u_k$ around $x$, with $r_k = 64 \eta r_{k - 1}$, in a similar manner, and we claim that this gives
	\[
		u(x', t) \geq (\frac{c_*}{8})^k \frac{c_*}{2} (t - x_n)
	\]
	on $t \in (x_n + 2 \eta r_k, x_n + r_k)$. Let us verify this claim by induction on $k$, with the $k = 1$ case already complete. As before we must ensure that $u_k$ satisfies the differential inequalities, which follows from
	\[
		u(x + r_k/2 e_n) \geq (\frac{c_*}{8})^{k - 1} \frac{c_*}{2} \frac{r_k}{2} \geq (64 \eta)^k r_k \geq r_k^2; 
	\]
	we are using here that $r_k/2  \in (2 \eta r_{k - 1}, r_{k - 1})$ by construction and the inductive hypothesis. After applying the lemma and scaling back, we obtain for $t \in (x_n + 2 \eta r_k, x_n + r_k)$ that
	\[
		u(x', t) \geq c_*/2 (t - x_n) \frac{u(x + r_k/2 e_n)}{r_k} \geq (\frac{c_*}{8})^{k} \frac{c_*}{2}(t - x_n),
	\]
	as claimed. 
	
	Now fix $t \in (x_n, 2\eta)$, and find the largest $k$ for which $t \in (x_n + 2 \eta r_k, x_n + r_k)$. As these intervals cover $(x_n, 2\eta)$ this is well-defined, and as $r_k = (64 \eta)^k$, we have that
	\[
		k \leq \frac{\log (t - x_n)}{\log 64 \eta} \leq k + 1.
	\]
	Using this with our estimate on $u$,
	\[
		u(x', t) \geq (\frac{c_*}{8})^{k} \frac{c_*}{2}(t - x_n) \geq c (t - x_n)^{- c' \log \eta} (t - x_n) \geq c (t - x_n)^{1 - c' \log \eta}
	\]
	where $c, c'$ only depend on $c_*$ and explicit numbers. Select $\eta$ small enough that $1 - c' \log \eta < \beta$.
	
	Finally, we observe that if $ t \geq 2 \eta$, then from our very first estimate
	\[
		u(x', t) \geq \frac{c_*}{2} t \geq \frac{c_*}{4}(t - x_n),
	\]
	as $|x_n|\leq \eta$ from the Lipschitz nature of $D_{L, 1}$. For any point $z = (x', t)$ as above, $d(z, \partial D_{L, 1}) \leq (t - x_n)$ (as $(x', x_n)\in \partial D_{L, 1}$), so this reads
	\[
		u(x', t) \geq c d^{\beta}((x', t), \partial D_{L, 1})
	\]
	for $(x', t) \in D_{L, 1/64}$.
\end{proof}

\begin{proof}[Proof of Theorem \ref{t:flmain}]
	We write $\Omega = D_{L, 1}$, as above. Set $U = D_{L, 1}$, $Q$ to be the collection of all sets of the form $U \cap B_r(x)$, and $H_{U \cap B_r(x)}[f, g]$ the Perron solution operator, mapping right hand side $f$ and boundary data $g$ to the unique viscosity solution. Then properties (P1), (P2), and (P4) are immediate, while (P3) and (P5) follow from the viscosity comparison principle (see \cite{f01}). 
	Set $V = \{u \in C(\bar{U}): u \geq 0,  -A \leq  F(D^2 u, \nabla u)  \leq A \text{ on } U\}$ 
	with constant $A$ to be chosen; then (P6) follows from Lemma \ref{l:remainder}. Finally, (P7) is the Krylov-Safonov Harnack inequality, while (P8) may be found in \cite{f01}, Lemma 2.4, combined with Lemma \ref{l:classic2} above.
	
	Fix $\beta \in (1, 2)$ and apply Lemma \ref{l:growthnonlinear} to $u$ and $v$, selecting $\eta, \epsilon$ sufficiently small. This gives that
	\[
	u(x), v(x) \geq c d^\beta(x, \partial U \cap B_1)
	\]
	on $B_{1/2} \cap U$. This gives that if $x^0 = c_* e_n/4$, $u(x^0), v(x^0) \geq c$. On the other hand, applying the Harnack inequality on a region bounded away from $\partial U$ and containing $e_n/2, x^0$ gives that $u(x^0), v(x^0) \leq C$. The functions $u_1(x) = u(x)/u(x^0)$ and $u_2(x) = v(x)/v(x^0)$ solve $- C \leq F(D^2 u_i,  \nabla u_i)  \leq C$, so in particular in $V$ if $A$ is chosen appropriately. Applying Theorem \ref{t:meta} gives
	\[
	\sup_{U \cap B_{c_*/2}} \frac{u}{v} \leq C\sup_{B_{c_*/2} \cap U} \frac{u_1}{u_2} \leq C.
	\]
	
	The statement as written (on $U \cap B_{1/2}$) then follows after a standard covering argument.
\end{proof}

%%%%%%%%%%%%%%%%%%%%%%
%%%%%%%%%%%%%%%%%%%%%%%
 \section{p-Laplacian boundary Harnack}  
 %%%%%%%%%%%%%%%%%%%%%%%
 %%%%%%%%%%%%%%%%%%%%%%%
 
We now demonstrate the versatility of this approach by showing the same result for the $p$-Laplacian 
$(1<p< \infty)$ defined through $\hbox{div} (|\nabla u |^{p-2} \nabla u) $.
 The analogue of Lemma \ref{l:classic2} (the boundary Harnack principle for the $p$-Laplace with right hand side zero) is proven in \cite{ln10a}. We will also need the analogues of Lemma \ref{l:remainder} and Lemma \ref{l:growthnonlinear} for the $p$-Laplacian. The analogue of Lemma \ref{l:growthnonlinear} is proven in the same manner for the $p$-Laplacian. However, proving Lemma \ref{l:remainder} for the $p$-Laplacian is more difficult: to see why, note that a difference $u - v$ of two solutions to the (inhomogeneous) $p$-Laplacian does not satisfy a PDE of the same type; rather, at best it satisfies a kind of linearized equation with coefficients dependent on $\nabla u$ and $\nabla v$. Our approach here will be to establish bounds on these gradients and then work with this linearized equation.

\subsection{Growth estimates}

\begin{lemma}\label{l:growthplaplace}  Let $\beta \in (1, 2)$. Then there are constants $\epsilon, \eta > 0$ such that if $u \geq 0 $ on $D_{L, 1}$, $u(e_n/2) \geq 1$, $u = 0$ on $\partial D_{L, 1}\cap B_1$,
	\[
	- 1 \leq \Delta_p u \leq \epsilon,
	\]
	and $L \leq \eta$, then
	\[
	u(x) \geq c_1 d^\beta(x, \partial D_{L, 1})
	\]
	for $x\in D_{L, 1/64}$. 
\end{lemma}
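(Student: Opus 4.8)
The plan is to mirror the proof of Lemma~\ref{l:growthnonlinear} essentially verbatim: that argument rests entirely on the linear growth estimate of Lemma~\ref{l:barrier} together with an iteration, so the real work is to establish the $p$-Laplace analogue of Lemma~\ref{l:barrier}, after which the iteration converting linear growth to $d^\beta$-growth transfers with only bookkeeping changes. The three ingredients of Lemma~\ref{l:barrier} all have $p$-Laplace counterparts. First, the explicit radial supersolution $\phi(x)=|x|^{-q}$: the standard radial formula $\Delta_p\phi = |\phi'|^{p-2}\bigl((p-1)\phi'' + \tfrac{n-1}{r}\phi'\bigr)$ gives $\Delta_p\phi = q^{p-1}\,|x|^{-[(q+1)(p-2)+q+2]}\bigl((p-1)(q+1)-(n-1)\bigr)$, and for $q$ large in terms of $n,p$ the bracket is positive while the exponent of $|x|$ is negative, so $\Delta_p\phi\geq 1$ on $B_1$ (rescaling $\phi$ by a constant if needed), and $\phi$ is smooth with nonvanishing gradient on the annuli we will use. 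Second, the scale-invariant Harnack inequality for nonnegative solutions of the $p$-Laplace equation with bounded right-hand side, from the De Giorgi--Nash--Moser / Serrin--Trudinger theory, which yields $\inf_{B_\kappa(e_n/2)}u \geq c\sup_{B_\kappa(e_n/2)}u - C\kappa^{p/(p-1)}\geq c$ exactly as in Lemma~\ref{l:barrier}. Third, the comparison principle for the $p$-Laplacian, which is classical and is already used elsewhere in the paper.

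With these in hand I would repeat the two-step barrier construction of Lemma~\ref{l:barrier} word for word. Using the Harnack step, $u\geq c$ on a small ball around $e_n/2$; then comparing $u$ against $c\,\dfrac{\phi(\cdot-e_n/2)-(3/8)^{-q}}{\kappa^{-q}-(3/8)^{-q}}$ on the annulus $B_{3/8}(e_n/2)\setminus B_\kappa(e_n/2)$ (whose $p$-Laplacian is a positive constant $\epsilon_1$, so that the comparison applies when $\epsilon<\epsilon_1$) spreads this bound to a middle slab $D$; finally comparing against translated copies of $\phi$ on annuli $B_{1/4-\eta}(x)\setminus B_{1/32}(x)$ centered at $x\in D$ with $x_n=1/4$ yields $u(x',t)\geq c_1(t-\eta)$ for $t\in(\eta,\tfrac14-\tfrac1{32})$, hence $u(x)\geq c_*(x_n-\eta)$ on $D_{L,1/16}$. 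This is the $p$-Laplace analogue of Lemma~\ref{l:barrier}.

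For the iteration, fix $x\in\partial D_{L,1}\cap B_{1/64}$ and set $r_k=(64\eta)^k$, $\mu_k=u(x+r_ke_n/2)$, $u_k(y)=\mu_k^{-1}u(x+r_ky)$. The only arithmetic that differs from Lemma~\ref{l:growthnonlinear} is the scaling of the equation: $\Delta_p u_k(y)=(r_k^p/\mu_k^{p-1})\,(\Delta_p u)(x+r_ky)$, so that $u_k$ inherits the hypotheses of the analogue of Lemma~\ref{l:barrier} as soon as $r_k^p/\mu_k^{p-1}\leq 1$. Feeding in the inductive lower bound $\mu_k\geq (c_*/8)^k\tfrac{c_*}{4}r_k$, this reduces (up to a $p$-dependent constant) to $64\eta\leq (c_*/8)^{p-1}$, which holds once $\eta$ is small in terms of $n$ and $p$. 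The lemma then gives $u(x',t)\geq (c_*/8)^k\tfrac{c_*}{2}(t-x_n)$ on $t\in(x_n+2\eta r_k,x_n+r_k)$; for given $t$ taking the largest admissible $k$ (so $k\asymp \log(t-x_n)/\log(64\eta)$) converts this into a Hölder-type bound $u(x',t)\geq c\,(t-x_n)^{1-c'\log\eta}$, and shrinking $\eta$ makes the exponent $<\beta$. Combining with the linear bound $u(x',t)\geq \tfrac{c_*}{4}(t-x_n)$ for $t\geq 2\eta$, and with $d(z,\partial D_{L,1})\leq t-x_n$, gives $u\geq c_1 d^\beta(\cdot,\partial D_{L,1})$ on $D_{L,1/64}$.

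The main obstacle, to the extent there is one, lies entirely in the $p$-Laplace analogue of Lemma~\ref{l:barrier}: one must confirm that the degenerate/singular operator $\Delta_p$ still admits the radial supersolution $|x|^{-q}$ on the relevant scales and a Harnack inequality with bounded inhomogeneity. Both are classical for the $p$-Laplacian, and once they are recorded the geometric bookkeeping of the iteration closes up for $\eta$ small in exactly the same way as in Lemma~\ref{l:growthnonlinear}, with the Laplacian scaling exponent $2$ replaced throughout by the $p$-Laplace scaling exponent $p$ (so that, for instance, the smallness condition $64\eta<c_*/8$ becomes $64\eta<(c_*/8)^{p-1}$).
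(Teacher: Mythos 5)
Your proposal is correct and follows essentially the same route as the paper: the paper's proof likewise reduces everything to the argument of Lemma~\ref{l:growthnonlinear}, verifying that $\phi(x)=|x|^{-q}$ satisfies $\Delta_p\phi\geq 1$ for $q$ large (the paper computes this via the matrix form $|\nabla\phi|^{p-2}\operatorname{Tr}[I+(p-2)\nabla\phi\otimes\nabla\phi/|\nabla\phi|^2]D^2\phi$ rather than your radial formula, with the same outcome) and adjusting the rescaling condition to $u(x+r_ke_n/2)\geq r_k^{p/(p-1)}$, i.e.\ replacing $c_*/8>64\eta$ by $c_*/8>(64\eta)^{1/(p-1)}$, exactly as you do.
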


\begin{proof}
The proof follows that of Lemma \ref{l:growthnonlinear} with minor modifications, which we explain here. First, in the proof of Lemma \ref{l:barrier} we used a barrier function $\phi(x) = |x|^{-q}$ which was a subsolution to the equation on $B_1 \setminus \{0\}$. For large values of $q$ (depending on $p$), this also has $\Delta_p u \geq 1$. Indeed, the $p$-Laplacian is given by
\[
\Delta_p \phi = |\nabla u|^{p - 2}Tr [I + (p - 2) \frac{\nabla \phi \otimes \nabla \phi}{|\nabla \phi|^2} ] D^2 \phi.
\]
The matrix in square brackets is independent of the form of $\phi$ for any radial, radially decreasing function, and at the point $r e_n$ the $ij$-th entry is given by $\delta_{ij} + \delta_{in}\delta_{jn}(p - 2)$. Computing $D^2\phi$ at this point, one may check that this is a diagonal matrix in the $e_i$ basis with $\partial_{ii} \phi =  - q r^{-q -2}$ for $i < n$ and $\partial_{nn} \phi = q (q - 2) r^{- q - 2}$. This gives that at $r e_n$,
\[
 Tr [I + (p - 2) \frac{\nabla \phi \otimes \nabla \phi}{|\nabla \phi|^2} ] D^2 \phi \geq c(p, n) q (q - 2) r^{-q - 2}
\]
for all $q$ sufficiently large enough. On the other hand, $|\nabla \phi| = q r^{- q - 1}$, so
\[
\Delta_p \phi \geq c q ( q - 2) r^{- q - 2} [ q r^{- q - 1} ]^{p - 2 }  \geq 
  c q^{p - 1} ( q - 2) r^{- [q + 2 + (p - 2)( q + 1)]}.
\]
As $p > 1$, the exponent in square brackets $q + 2 + (p - 2)( q + 1) > 1$, so
\[
	\Delta_p \phi \geq c q^{p - 1}(q - 2) r^{-1} \geq 1
\]
so long as $q$ is chosen large enough in terms of $c$ and $p$.

The only other modification needed is in \eqref{e:growth} in the proof of Lemma \ref{l:growthnonlinear}, where we rescale $u_k(y) = u(x + r_k y)/u(x + r_k e_n/2)$ and compute the PDE. Here our equation is different, but we still have that
\[
\Delta_p u_k  (x) = \frac{r_k^p}{u^{p - 1}(x + r_k e_n/2)} (\Delta_p u  ) (x + r_k y ),
\]
which implies $- 1 \leq \Delta_p u_k \leq \epsilon$ 
so long as $u(x + r_k e_n/2) \geq r_k^{\frac{p}{p-1}}$. This may be ensured by replacing the condition 
$c_*/8 > 64 \eta$ with $c_*/8 > (64 \eta)^{\frac{1}{p - 1}}$ on $\eta$.
\end{proof}

An elementary Harnack principle at the boundary:
\begin{lemma}\label{l:bdrybdd} Assume $L \leq \frac{1}{100}$. Let $u\geq 0$ on $D_{L, 1}$ and satisfy $- 1\leq  \Delta_p u \leq 1$. Assume, moreover, that $u = 0$ on $\partial D_{L, 1} \cap B_1$, the graph part of the boundary, and $u(e_n/2) = 1$. Then there is a constant $C = C(n, p)$ such that
	\[
	\sup_{D_{L, 1/2}} u \leq C.
	\]
\end{lemma}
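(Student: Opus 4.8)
The plan is to bound $u$ from above in $D_{L,1/2}$ by splitting $u = w + h$, where $w$ solves the homogeneous $p$-Laplace equation in $D_{L,3/4}$ with $w = u$ on $\partial D_{L,3/4}$, and $h$ is the correction term. First I would control $h$: since $h$ solves an inhomogeneous problem with right-hand side bounded by $1$ and $h = 0$ on $\partial D_{L,3/4}$, a barrier argument (of the type used in Lemma \ref{l:remainder}, but adapted to the $p$-Laplacian — compare the linearization discussion preceding Lemma \ref{l:growthplaplace}) gives $|h| \leq C(n,p)$ on $D_{L,3/4}$. The more delicate part is $w$: it is a nonnegative $p$-harmonic function vanishing on the graph portion $\partial D_{L,3/4}\cap B_{3/4}$, so by the (homogeneous) boundary Harnack principle for the $p$-Laplacian from \cite{ln10a} — or more simply by the Carleson estimate for $p$-harmonic functions in Lipschitz domains with small constant — one has $\sup_{D_{L,1/2}} w \leq C\, w(e_n/2)$. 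Since $w(e_n/2) = u(e_n/2) - h(e_n/2) \leq 1 + C(n,p)$, this closes the estimate: $\sup_{D_{L,1/2}} u \leq \sup w + \sup |h| \leq C(n,p)$.

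The key steps, in order, are: (1) set up the decomposition $u = w + h$ on $D_{L,3/4}$, using solvability of the Dirichlet problem for the $p$-Laplacian with continuous boundary data (the boundary data $u|_{\partial D_{L,3/4}}$ is continuous and nonnegative, vanishing on the graph part); (2) prove the quantitative bound $|h|\leq C(n,p)$ via explicit barriers, exactly paralleling Lemma \ref{l:remainder} — for radius $\leq 1$ the barrier $G(x) = \frac{c}{n}(R^2 - |x|^2)$ type construction works once $R$ is small relative to the lower-order constant, and one uses the comparison principle for the $p$-Laplacian together with the fact that $\pm h$ is a sub/supersolution to a suitable inequality; (3) apply the Carleson / boundary-Harnack-type estimate to the $p$-harmonic function $w\geq 0$ vanishing on $\partial D_{L,3/4}\cap B_{3/4}$ to get $\sup_{D_{L,1/2}} w \leq C w(e_n/2)$; (4) combine, using $w(e_n/2) \leq u(e_n/2) + |h(e_n/2)| \leq 1 + C(n,p)$.

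I expect the main obstacle to be step (2): unlike in the fully nonlinear case, the difference $h = u - w$ of two solutions of the inhomogeneous $p$-Laplace equation does not solve an equation of the same form, but only a linearized equation with coefficients depending on $\nabla u$ and $\nabla w$ (as flagged in the text just before Lemma \ref{l:growthplaplace}). So one cannot directly invoke a single comparison principle; instead one must either (a) obtain gradient bounds on $u$ and $w$ near the relevant region and then run the comparison for the resulting uniformly elliptic linearized operator against an explicit barrier, or (b) argue directly with $p$-superharmonic/subharmonic comparison functions of the form $G(x) = a - b|x - x_0|^{p/(p-1)}$, which are genuine sub/supersolutions of the inhomogeneous $p$-Laplace equation with controllable right-hand side, and compare $u$ against $w \pm G$ using the (nonlinear) comparison principle for the $p$-Laplacian — this avoids linearizing altogether and is probably cleanest here. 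Either way, the constants must be tracked to depend only on $n$ and $p$ (using $L \leq \tfrac{1}{100}$ to keep the geometry under control), which is routine once the right barrier is in hand. Step (3) is essentially a citation, though one should note that the version needed — a one-sided Carleson estimate — is weaker than the full boundary Harnack and holds for Lipschitz domains without smallness of $L$, so it is available off the shelf.
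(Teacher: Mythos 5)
Your step (2) is where the argument breaks, and neither of your proposed fixes repairs it. The comparison you call ``cleanest'' --- bounding $u$ by $w \pm G$ with $w$ being $p$-harmonic and $G$ an explicit sub/supersolution --- would require knowing the sign of $\Delta_p(w+G)+1$, but the $p$-Laplacian is not additive: $\Delta_p(w+G)\neq \Delta_p w+\Delta_p G$, so $\Delta_p w=0$ and $\Delta_p G\le -1$ give you no control whatsoever on $\Delta_p(w+G)$, and the nonlinear comparison principle cannot be invoked. This is precisely the obstruction the paper flags at the start of the $p$-Laplace section, and it is why there is no $p$-Laplace analogue of Lemma \ref{l:remainder} obtainable by a one-line barrier comparison. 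Your alternative (a) --- linearize the difference $h=u-w$ and compare against a barrier for the resulting elliptic operator --- is essentially the route the paper does take later (Lemmas \ref{l:degiorgi} and \ref{l:plaplaceapprox}), but it requires two-sided gradient bounds $c\,d^{\beta_2-1}\le |\nabla u|\le C d^{\beta_1-1}$ near the boundary (Theorem \ref{l:gradbddbelow} together with Lemma \ref{l:upperbdplaplace}) to keep the linearized coefficients nondegenerate; the upper growth bound of Lemma \ref{l:upperbdplaplace} is itself proved by first invoking the very statement you are trying to establish. So within the paper's logical order (a) is circular, and without those gradient bounds the coefficients $a_{ij}$ can degenerate or blow up near $\partial D_{L,1}$ and no comparison with a smooth barrier is available. (Step (3), the Carleson estimate for the nonnegative $p$-harmonic function $w$, is fine and is indeed off the shelf.)

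The paper's actual proof is entirely different and far more elementary: no decomposition, no homogeneous boundary Harnack or Carleson input. It argues by contradiction. The interior Harnack inequality (with the additive error coming from the right-hand side) gives $u\le C[u(e_n/2)+1]\le C$ on $B_{1/2}\cap\{x_n\ge 1/10\}$. If the conclusion fails for this $C>1$, there is a point $x^1$ close to the graph with $u(x^1)>C$; rescaling about the boundary point beneath $x^1$ at scale $r_1=2\,\mathrm{dist}(x^1,\partial D_{L,1})$ and dividing by $u(x^1)$ yields a function satisfying the same hypotheses with an even smaller right-hand side (the rescaling multiplies $\Delta_p$ by $r_1^{p}/u(x^1)^{p-1}\le 1$), so the failure propagates, producing points $x^k\in B_{9/10}$ with $u(x^k)\ge C^k\to\infty$ and contradicting the continuity of $u$ on $\overline{D_{L,9/10}}$. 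If you want to keep a decomposition-based proof, you would first have to establish the approximation estimate $|u-w|\le C$ by some independent, non-circular means; as written, your step (2) does not close.
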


\begin{proof}
	We argue by contradiction, assuming that the conclusion is false. First, observe that from the Harnack inequality, we have that $ u \leq C [u(e_n/2) + 1]\leq C$ on $B_{1/2} \cap \{x_n \geq \frac{1}{10}\}$. However, as the conclusion of the lemma is false (with this particular $C > 1$), we may then find a point $x^1 \in D_{L, 1/2} \cap \{x_n \leq \frac{1}{10} \}$ with $u(x^1) > C$. Let $z^1$ be the unique point in $\partial D_{L, 1} \cap B_{1}$ with $(z^1)_n = (x^1)_n$, and set $u_1(y) = u(z^1 + r_1 y)/u(x^1)$, where $r_1 = 2 |z^1 - x^1|$. Let $D^1_{L, 1} = (D_{L, 1} - z^1)/r_1 \subset B_{r_1}(z^1) \subset B_{4/10}(x^1) \subset B_{9/10}$, noting this is still a Lipschitz set with the same constant $L$. Then $u_1 \geq 0$ on $D^1_{L, 1}$, has $u_1(e_n/2) = 1$, and satisfies $- r^p/C^{p - 1} \leq \Delta_p u_1 \leq r^p/C^{p - 1}$ on it.
	
	Now we repeat the process, finding a point $w$ for $u_1$, in the same region as $x^1$ for $u$, where $u_1(w) > C$. Set $x^2 = z^1 + r w \in B_{9/10}$, and continue the process in this way, constructing a sequence of points $\{x^k\} \subset B_{9/10}$, where $u(x^k) \geq C^k$. 
	After a finite number of steps, we will find a point where 
	$u(x^k) \geq \max_{D_{L, 9/10}} u$, which is finite as $u$ is continuous.  
	This is a contradiction.
\end{proof}

There is an analogous version of the lower bound lemma which gives bounds from above instead. The proof is identical apart from using barriers from above, which may be constructed from the functions $C - \phi$.

\begin{lemma}\label{l:upperbdplaplace} Let $\beta \in (0, 1)$. Then there is a constant $\eta > 0$ such that if $u \geq 0 $ on $D_{L, 1}$, $u(e_n/2) = 1$, $u = 0$ on $\partial D_{L, 1}\cap B_1$,
	\[
	- 1 \leq\Delta_p u \leq 1,
	\]
	and $L \leq \eta$, then
	\[
	u(x) \leq C_1 d^\beta(x, \partial D_{L, 1}\cap B_1)
	\]
	for $x\in D_{L, 1/64}$.
\end{lemma}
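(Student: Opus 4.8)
The plan is to mirror the proof of Lemma \ref{l:growthnonlinear} (or rather Lemma \ref{l:growthplaplace}, since we now work with the $p$-Laplacian), but to run every barrier comparison in the reverse direction, using supersolution barriers that sit \emph{above} $u$ instead of subsolution barriers that sit below it. Concretely, I would first establish an upper analogue of Lemma \ref{l:barrier}: using the function $G(x) = C - \phi(x - x_c)$ built from $\phi(x) = |x|^{-q}$ with $q$ large (so that $-\phi$, and hence $G$ up to an additive constant, is a \emph{supersolution}, $\Delta_p G \leq -\epsilon_0 < 0$ on the relevant annulus), together with the boundary value $u = 0$ on the graphical part of $\partial D_{L,1} \cap B_1$ and an upper bound $u \leq C$ on a ball around $e_n/2$ coming from the interior Harnack inequality (Lemma \ref{l:bdrybdd} gives $\sup_{D_{L,1/2}} u \leq C$). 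Comparing via the $p$-Laplace comparison principle on the annulus $A_x = B_r(x) \setminus B_{1/32}(x)$ then yields a linear-in-distance upper bound $u(x', t) \leq C_1(t - x_n + \eta)$ near the boundary, valid since $\Delta_p u \geq -1 \geq -\epsilon_0$ works in our favor for a supersolution comparison.

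Next I would iterate exactly as in Lemma \ref{l:growthplaplace}: fix a boundary point $x \in \partial D_{L,1} \cap B_{1/64}$, rescale $u_k(y) = u(x + r_k y)/u(x + r_k e_n/2)$ with $r_k = (64\eta)^k$, and check that the rescaled inhomogeneity stays controlled — here I need a \emph{lower} bound $u(x + r_k e_n/2) \geq r_k^{p/(p-1)}$, which I get from the first linear estimate at scale $r_{k-1}$ (not from a growth bound, which would be circular), precisely as in the cited lemma. Applying the upper barrier lemma to each $u_k$ and unwinding the rescalings gives $u(x', t) \leq (C)^k \cdot C_1 (t - x_n)$ on the dyadic shell $t \in (x_n + 2\eta r_k, x_n + r_k)$. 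Choosing $k$ so that $t - x_n \approx r_k = (64\eta)^k$ converts the geometric factor $C^k$ into a power $(t - x_n)^{-c' |\log \eta|}$, so that $u(x', t) \leq c (t - x_n)^{1 - c'|\log\eta|}$; taking $\eta$ small forces $1 - c'|\log\eta| \to 1$ from below, wait — here we need the exponent to be \emph{less} than $1$ to match $\beta \in (0,1)$, so the sign works out: the extra factors make the bound \emph{worse} (larger), i.e. the exponent on $(t-x_n)$ decreases below $1$, and we pick $\eta$ so that it lands in $(0,1)$ above $\beta$; actually we want it $\geq \beta$, so we choose $\eta$ so that $1 - c'|\log\eta|$ is between $\beta$ and $1$. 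Finally, for $t \geq 2\eta$ the interior Harnack bound $u \leq C$ already gives the estimate (with room to spare), and since $d(z, \partial D_{L,1}) \le t - x_n$ is comparable to $t - x_n$ up to the Lipschitz constant, we conclude $u(x) \leq C_1 d^\beta(x, \partial D_{L,1} \cap B_1)$ on $D_{L,1/64}$.

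The paper essentially tells us to do this (``The proof is identical apart from using barriers from above, which may be constructed from the functions $C - \phi$''), so the write-up can be brief. The main point requiring care — and the only genuine obstacle — is the same subtlety flagged in the $p$-Laplace section: one cannot appeal to Lemma \ref{l:remainder} or to any difference-of-solutions argument, because differences of $p$-harmonic-type functions do not solve an equation of the same class. But since this lemma is a one-sided pointwise bound on a \emph{single} function $u$ rather than a comparison of two functions, the barrier argument applies directly to $u$ itself and this difficulty never arises here; the $p$-Laplace comparison principle for a sub/supersolution pair (one of which, the barrier, is explicit and smooth away from its singularity) is all that is needed. The bookkeeping to verify $\Delta_p u_k$ stays in $[-1, \epsilon]$ after each rescaling — i.e. that $u(x + r_k e_n/2) \geq r_k^{p/(p-1)}$ — is the one place to be slightly careful, but it is handled verbatim as in Lemma \ref{l:growthplaplace} by the condition $c_*/8 > (64\eta)^{1/(p-1)}$.
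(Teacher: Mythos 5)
Your overall strategy (upper barriers built from $C-\phi$, followed by the dyadic rescaling iteration of Lemma \ref{l:growthplaplace}) is the right one, but the step you yourself flag as ``the one place to be slightly careful'' is exactly where your argument breaks. To know that the rescaled function $u_k(y)=u(x+r_ky)/u(x+r_ke_n/2)$ still satisfies $-1\le\Delta_p u_k\le 1$ you need the \emph{lower} bound $u(x+r_ke_n/2)\ge r_k^{p/(p-1)}$. In Lemma \ref{l:growthplaplace} this comes from the inductive hypothesis because the quantity propagated there is itself a lower bound on $u$; here the quantity propagated is an \emph{upper} bound $u(x',t)\le C^kC_1(t-x_n)$, which gives no control of $u(x+r_ke_n/2)$ from below, so ``precisely as in the cited lemma'' does not apply and the condition $c_*/8>(64\eta)^{1/(p-1)}$ has no analogue. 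The correct source is the one you dismiss as circular: Lemma \ref{l:growthplaplace} is proved independently and earlier, so there is no circularity in first applying it with $\beta_2\in(1,p/(p-1))$ to get $u\ge c\,d^{\beta_2}\ge d^{p/(p-1)}$ near the boundary; this makes \emph{every} rescaling $u_r$ admissible at once, with no inductive verification needed. That is what the paper does.

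A secondary issue is the geometry of the upper barrier, which is not Lemma \ref{l:barrier} with inequalities reversed. If you keep the annulus $B_r(x)\setminus B_{1/32}(x)$ centered at an interior point and dominate $u$ on the inner sphere using $u\le C$ there, then since $C-\phi(\cdot-x)$ is radially \emph{increasing} its minimum over the annulus is on the inner sphere, where it must already exceed $\sup u$; hence the barrier is bounded below by a positive constant on the whole annulus, in particular at the graph (which sits in the outer part of the annulus), and the resulting bound cannot decay to zero at $\partial D_{L,1}$. The inner ball must instead be placed \emph{outside} the domain (an exterior tangent ball at the boundary point, available since $L\le\eta$ is small): the barrier then vanishes on the inner sphere (harmlessly, as that sphere lies outside $D_{L,1}$), equals $C\ge\sup u$ on the outer sphere by Lemma \ref{l:bdrybdd}, dominates $u=0$ on the graphical boundary, and its linear decay toward the exterior ball is the desired estimate. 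Note also that since the hypothesis here is $\Delta_p u\ge-1$ rather than $\ge-\epsilon$, the barrier must satisfy $\Delta_p G\le-1$ and not merely $\le-\epsilon_2$; this is arranged by multiplying by a large constant, which only helps an upper barrier. Your exponent bookkeeping at the end is right in spirit (the losses push the exponent below $1$, and $\eta$ small keeps it above $\beta$), though the exponent should read $1-c'/|\log\eta|$ rather than $1-c'|\log\eta|$.
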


\begin{proof} 
%\mpar{\ggreen{What I am saying is that Lemma 4.1 is already analogue of Lem 3.5, so why r we referring to Lem 3.5?}  -- Well, the proof of that lemma calls back to 3.4/3.5 as well: you prove one thing at scale $1$ with barriers (i.e. Lemma \ref{l:barrier}), and then iterate it a bunch of times (i.e. Lemma \ref{l:growthnonlinear}). What I'm saying here is that for the first part, it works the same as long as you know the function is bounded, and for the second part, it works the same so long as you have a lower bound, which you already do. Feel free to reword however you like; if you feel this merits a full-detail proof, let me know. DK}

	For the first conclusion, the proof proceeds similarly to the proof of Lemma \ref{l:growthplaplace}, with two minor differences: first, apply Lemma \ref{l:bdrybdd} to ensure that $u \leq C$ in $D_{L, 1/2}$, after which the analogue of Lemma \ref{l:barrier} follows easily by using $C - \phi$ in place of $\phi$ as a barrier (with the inner part of the annulus outside the domain). When proving the analogue of Lemma \ref{l:growthnonlinear}, first apply Lemma \ref{l:growthplaplace} to give that
	\[
		u(x) \geq c d^{\beta_2}(x, \partial D_{L, 1}\cap B_1) \geq d^{\frac{p}{p - 1}}(x, \partial D_{L, 1}\cap B_1)
	\]
	for $|x|$ small enough, choosing $\beta_2$ close to $1$. This alone guarantees that all of the functions $u_r(z) = u(y + r z)/u(y + r e_n/2)$ (for $y \in \partial D_{L, 1}\cap B_1$ and $r$ small) satisfy the same differential inequalities as $u$:
	\[
		-1 \leq \Delta_p u_r \leq 1.
	\] 
	The proof then proceeds similarly to the lower bound.
\end{proof}

\subsection{Derivative lower bounds}

We use the following Liouville-type result. 

\begin{lemma}  \label{l:lious}
	If $u,v \geq 0$ and $u,v$ satisfy $\Delta_p u, \Delta_p v = 0$ on $D_{L, \infty}$, with $u, v = 0$ on $\partial D_{L, \infty}$, then $u=cv$ for some constant $c$. 
\end{lemma}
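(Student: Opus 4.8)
The plan is to treat this as a boundary Harnack statement in the exterior domain $D_{L,\infty}$, exploiting the fact that such a domain is a 1-sided (indeed two-sided) NTA domain and that $u, v$ are global nonnegative $p$-harmonic functions vanishing on the whole graphical boundary. The key point is that there is no right-hand side, so the subtle approximation machinery is not needed: one can use the scale-invariant boundary Harnack principle directly (the $p$-Laplace analogue of Lemma \ref{l:classic2}, from \cite{ln10a}) at every scale, together with a compactness/monotonicity argument in the scaling parameter.

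The steps I would carry out: First, normalize so that $u(e_n) = v(e_n) = 1$ (after multiplying $v$ by a constant; this is the constant $c$ we are after, so our goal becomes $u \equiv v$). Second, for each $R > 0$ apply the homogeneous boundary Harnack principle in $D_{L,R}$ (with the point $R e_n / 2$ playing the role of the interior reference point, using the scale invariance of the $p$-Laplace equation under $u(x) \mapsto u(Rx)$): this gives $C^{-1} \le u/v \le C$ in $D_{L, R/2}$ with $C$ independent of $R$. Hence $0 < m := \inf_{D_{L,\infty}} u/v \le \sup_{D_{L,\infty}} u/v =: M < \infty$, and by the normalization $m \le 1 \le M$. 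Third — the heart of the argument — I would run an oscillation-decay iteration: consider $w = u - m v \ge 0$. This $w$ is again a nonnegative function vanishing on $\partial D_{L,\infty}$, but it is \emph{not} $p$-harmonic; however, as noted in the text, $u - mv$ solves a linearized equation with measurable coefficients depending on $\nabla u$, $\nabla v$ (bounded and elliptic away from the boundary by interior gradient estimates), for which a boundary Harnack principle still holds in Lipschitz domains with small constant. Applying it to the pair $(w, v)$ on $D_{L,R}$ and letting $R \to \infty$ shows that $w/v$ attains its infimum $0$ in the interior only if $w \equiv 0$; more precisely, the scale-invariant oscillation estimate forces $\sup_{D_{L,R/2}} w/v \le \theta \sup_{D_{L,R}} w/v$ for a fixed $\theta < 1$, and since the right side is bounded uniformly in $R$, sending $R \to \infty$ gives $\sup w/v = 0$, i.e. $u = mv$. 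Combined with $u(e_n) = v(e_n) = 1$ this yields $m = 1$ and $u \equiv v$ on the interior; continuity extends it to the closure.

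The main obstacle is justifying that the difference $w = u - mv$ — which only satisfies a linearized equation degenerating where $\nabla u$ or $\nabla v$ vanishes — is a legitimate input to a boundary Harnack principle, and that the relevant boundary Harnack constant is uniform across all dyadic scales $R$. Away from the boundary this is fine by interior $C^{1,\alpha}$ estimates (the gradients are bounded and the equation is uniformly elliptic there), but near the graph one must either invoke the known boundary regularity for $p$-harmonic functions in Lipschitz domains (so that $|\nabla u|, |\nabla v|$ are comparable to $d(x,\partial D)^{\,?}$ and the linearized operator is controlled), or bypass the linearization entirely: an alternative, cleaner route that avoids this issue is to apply the homogeneous boundary Harnack principle of \cite{ln10a} directly to the pair $(u,v)$ at scale $R$ to get $\mathrm{osc}_{D_{L,R/2}} (u/v) \le (1 - \delta)\, \mathrm{osc}_{D_{L,R}} (u/v)$ via the standard Harnack-chain argument, and then take $R \to \infty$ to conclude the oscillation is zero. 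I would develop this second route in the actual proof, using only Lemma \ref{l:classic2}'s $p$-Laplace analogue as a black box and the scaling invariance of $\Delta_p$, so that no linearized equation is needed at all for this particular lemma.
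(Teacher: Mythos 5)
Your overall strategy---rescale by $R$, use scale invariance of $\Delta_p$, and force the ratio $u/v$ at a fixed point to agree with its value ``at the boundary point $0$'' by sending $R\to\infty$---is the same blow-down idea as the paper's proof. But the route you commit to (Route B) has a genuine gap at its central step: the claimed oscillation decay $\operatorname{osc}_{D_{L,R/2}}(u/v)\leq(1-\delta)\operatorname{osc}_{D_{L,R}}(u/v)$ does \emph{not} follow from the plain two-sided bound $C^{-1}\leq u/v\leq C$ of the $p$-Laplace analogue of Lemma \ref{l:classic2} ``via the standard Harnack-chain argument.'' That standard argument proceeds by setting $m=\inf_{D_{L,R}}u/v$, $M=\sup_{D_{L,R}}u/v$ and applying the boundary Harnack inequality to the pair $(u-mv,\,v)$ or $(Mv-u,\,v)$; this requires $u-mv$ to again be a solution, which is exactly the linearity that the $p$-Laplacian lacks. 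So Route B does not circumvent the linearization difficulty you correctly identified in Route A---it silently reuses it. With only the bound $u/v\leq C$ as a black box, no contraction of the oscillation is available, and the argument does not close.

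The fix, which is what the paper does, is to take as the black box not the plain quotient bound but the stronger theorem of Lewis--Nystr\"om \cite{ln10}: the ratio of two nonnegative $p$-harmonic functions vanishing on a Lipschitz graph is H\"older continuous up to the boundary, with scale-invariant estimates. (This is a genuinely deeper result than $u/v\leq C$; its proof goes through precisely the gradient bounds $|\nabla u|\approx u/d$ and the degenerate linearized equation that you flagged as the obstacle.) Granting that, the paper normalizes $\lim_{x\to 0}u/v=1$, sets $u_R(x)=u(Rx)/u(Re_n)$ and $v_R(x)=v(Rx)/v(Re_n)$, notes that $u_R/v_R$ is H\"older on $D_{L,2}$ with norm independent of $R$, and evaluates at $x^0/R\to 0$ to conclude $u(x^0)/v(x^0)=1$ for every $x^0$. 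If you replace your oscillation-decay step by an appeal to this H\"older continuity of the quotient (rather than to Lemma \ref{l:classic2}'s analogue alone), your argument becomes essentially the paper's.
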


\begin{proof}
	We assume that $u,v$ are both not identically zero. We use the classical boundary Harnack principle from \cite{ln10} to show that $u/v$ is uniformly H\"older continuous up to the boundary on any compact subset of $\mathbb{R}^n$.  We normalize $u$ so that $\lim_{x \rightarrow 0} u(x)/v(x)=1$, and let $x^0 \in D_{L,\infty}$. The rescaled functions 
	\[
	u_R(x):= \frac{u(Rx)}{u(Re_n)}    \qquad v_{R}:=\frac{v(Rx)}{v(Re_n)}
	\]
	are also $p$-harmonic. Furthermore, we have that $\lim_{x \rightarrow 0} u_R(x)/v_R(x)=1$,  and that $u_R/v_R$ is H\"older continuous on $D_{L,2}$ with norms independent of $R$. 
	Then by continuity of the quotient, for any $\epsilon>0$ there exists $R$ large enough, so that 
	\[
	\left| \frac{u_R(x^0/R)}{v_R(x^0/R)} - \frac{u_R(0)}{v_R(0)}\right| \leq C |x^0|/R < \epsilon. 
	\]
	We note that 
	\[
	\frac{u_R(x^0/R)}{v_R(x^0/R)} = \frac{u(x^0)}{v(x^0)},
	\]
	and therefore  $u(x^0)/v(x^0) = 1$, implying $u = v$.
\end{proof}

As a consequence, on $D_{L, \infty} = \R_+^n$ we have $u = c (x_n)_+$ in the configuration above. We exploit this fact below.

\begin{theorem}  \label{l:gradbddbelow}
	There exist constants $C, \epsilon, \eta, r > 0$ depending only on $n$ and $p$ such that if $L < \eta$,
	\[
	\begin{cases}
	-1 \leq \Delta_p u \leq \epsilon & \text{ on } D_{L,2} \\
	u = 0 \text{ on } & \partial D_{L,2} \cap B_{2} \\
	u(e_n)=1 &\\
	u \geq 0 & \text{ in } D_{L,2}
	\end{cases}
	\]
	then 
	\[
	\frac{1}{C} \frac{u(x)}{d(x,\partial D_{L,2})} \leq |\nabla u(x)| \leq C  \frac{u(x)}{d(x,\partial D_{L,2})} \text{ whenever }
	x \in B_{r}.
	\]
\end{theorem}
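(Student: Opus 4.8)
The plan is to prove the two inequalities separately: the upper bound $|\nabla u| \le C\, u/d$ is a routine consequence of interior regularity, while the lower bound $|\nabla u| \ge c\, u/d$ is obtained by a blow-up argument combined with the Liouville-type Lemma \ref{l:lious}. For the upper bound, fix $x \in B_r \cap D_{L,2}$ and set $d = d(x,\partial D_{L,2})$, which for $r$ small equals the distance to the graph part of the boundary and satisfies $d < r$. Rescale $w(y) = u(x + \tfrac{d}{2}y)/u(x)$ on $B_1$, so $w(0)=1$, $w\ge 0$, and $\Delta_p w = \tfrac{(d/2)^p}{u(x)^{p-1}}(\Delta_p u)(x+\tfrac d2 y)$. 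Choosing $\beta\in(1,2)$ with $\beta(p-1)<p$ in the growth estimate (a minor variant of Lemma \ref{l:growthplaplace} adapted to the normalization $u(e_n)=1$) gives $u(x)\ge c_1 d^\beta$, hence $\|\Delta_p w\|_\infty \le C d^{\,p-\beta(p-1)} \le C$. The Harnack inequality gives $w\le C$ on $B_{3/4}$, and the interior $C^{1,\alpha}$ estimate for the $p$-Laplacian with bounded right-hand side gives $|\nabla w(0)|\le C$; scaling back yields $|\nabla u(x)| = \tfrac{2}{d}u(x)|\nabla w(0)| \le C\, u(x)/d$ with a universal constant.

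For the lower bound I argue by contradiction. If it fails, then — using the upper bound just proved to see that it is the \emph{lower} inequality which is violated — there are Lipschitz domains $D_{L_j,2}$ with $L_j\to 0$, functions $u_j\ge 0$ with $-1\le \Delta_p u_j \le \epsilon_j\to 0$, $u_j = 0$ on $\partial D_{L_j,2}\cap B_2$, $u_j(e_n)=1$, and points $x_j\in B_{r_j}$ with $r_j\to 0$ such that, writing $d_j = d(x_j,\partial D_{L_j,2})$, one has $|\nabla u_j(x_j)|\,d_j/u_j(x_j)\to 0$. Since $0\in\partial D_{L_j,2}$, $d_j\le|x_j|<r_j\to 0$. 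Set $v_j(y)=u_j(x_j+d_j y)/u_j(x_j)$; then $v_j(0)=1$, $v_j\ge 0$, $|\nabla v_j(0)| = (d_j/u_j(x_j))|\nabla u_j(x_j)|\to 0$, and $\Delta_p v_j = g_j$ with $\|g_j\|_\infty \le d_j^{\,p}/u_j(x_j)^{p-1} \le C d_j^{\,p-\beta(p-1)}\to 0$ by the growth estimate. The rescaled domains $(D_{L_j,2}-x_j)/d_j$ are graph domains with Lipschitz constant $L_j\to 0$ whose graph passes at a height converging to $-1$ below $0$; hence on every fixed ball they converge to the half-space $\{y_n>-1\}$.

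It remains to pass to the limit, and this is where the main obstacle lies. On interior compact subsets of $\{y_n>-1\}$ we obtain uniform bounds: $v_j(0)=1$ together with the Harnack inequality (whose error term is controlled by $\|g_j\|_\infty\to 0$) along Harnack chains of bounded length gives $v_j\le C$, and then interior $C^{1,\alpha}$ estimates give $C^1_{\mathrm{loc}}$ bounds, so along a subsequence $v_j\to v_\infty$ in $C^1_{\mathrm{loc}}(\{y_n>-1\})$; since $\|g_j\|_\infty\to 0$, $v_\infty$ is $p$-harmonic in $\{y_n>-1\}$, with $v_\infty(0)=1$, so $v_\infty\not\equiv 0$. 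The delicate point is that a priori $v_\infty$ could be a positive constant (which has zero gradient and would contradict nothing); to exclude this I must show $v_\infty$ is continuous up to $\{y_n=-1\}$ and vanishes there. For the boundary boundedness I apply a scaled version of Lemma \ref{l:bdrybdd} together with a Harnack chain, obtaining $v_j\le C$ on a fixed ball up to the rescaled graph boundary; for equicontinuity up to the boundary I use a barrier argument — exploiting that the domains are Lipschitz with constant $\le\eta$ and hence satisfy a uniform exterior corkscrew condition — to get a uniform estimate $v_j(y)\le C\,(\mathrm{dist}(y,\partial))^\gamma$. This gives $v_j\to v_\infty$ uniformly on compact subsets of $\overline{\{y_n>-1\}}$ with $v_\infty=0$ on $\{y_n=-1\}$. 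Now Lemma \ref{l:lious}, comparing $v_\infty$ with the explicit $p$-harmonic function $(y_n+1)_+$, forces $v_\infty(y)=c(y_n+1)$ on $\{y_n>-1\}$, and $v_\infty(0)=1$ gives $c=1$, so $|\nabla v_\infty|\equiv 1$. But $\nabla v_j(0)\to 0$ and $\nabla v_j\to\nabla v_\infty$ near $0$, so $|\nabla v_\infty(0)|=0$, a contradiction. This proves the lower bound and completes the argument.
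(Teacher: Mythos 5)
Your proof is correct and its core is the same as the paper's: a blow-up by contradiction at scale comparable to $d(x,\partial D_{L,2})$, compactness via interior $C^{1,\alpha}$ estimates plus uniform boundary decay, and then Lemma \ref{l:lious} to identify the limit as a linear function $c\,(y_n+1)_+$ with $|\nabla|\equiv c>0$. The differences are minor: the paper folds \emph{both} inequalities into a single compactness argument (the contradiction hypothesis is that the ratio $|\nabla u|\,d/u$ escapes $[1/C,C]$ in either direction, and the limit $|\nabla u(e_n)|=1$ kills both cases at once), whereas you prove the upper bound separately by a direct scaling-plus-interior-estimate argument; the paper centers the rescaling at the boundary projection $y^k$ with scale $2s_k$ so the limit domain is $\R^n_+$, while you center at $x_j$ and get $\{y_n>-1\}$, which is equivalent; and for the uniform decay up to the boundary (needed, exactly as you say, to rule out a constant limit) the paper simply invokes Lemma \ref{l:upperbdplaplace}, which is precisely the barrier estimate $v_j\le C\,\mathrm{dist}(\cdot,\partial)^{\beta_2}$ you propose to re-derive, so you could shorten that step by citing it. The one constraint you should state explicitly, and which the paper also imposes, is that the exponent $\beta$ in the lower growth bound must satisfy $\beta<p/(p-1)$ so that the rescaled right-hand sides $d^{\,p-\beta(p-1)}$ stay bounded (and tend to zero in the blow-up); since $p/(p-1)>1$ this is always achievable from Lemma \ref{l:growthplaplace}.
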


\begin{proof}
	Suppose by way of contradiction that the theorem is not true. Then there exists $u_k, D^k_{L_k, 2}, \epsilon_k$ satisfying the assumptions with $\epsilon_k \to 0$
	and $x^k \in B_{r_k}$ satisfying such that either
	\begin{equation}  \label{e:updown}
	|\nabla u_k (x^k)| \leq \frac{1}{C} \frac{u_k(x^k)}{d(x^k,\partial D_{L_k,2}^k)} \quad \text{or} \quad
	|\nabla u_k (x^k)| \geq C\frac{u_k(x^k)}{d(x^k,\partial D_{L_k,2}^k)}.
	\end{equation}
	
	Apply Lemma \ref{l:growthplaplace} to $u_k$ to obtain (for some $c>0$)
	\[
	c d^{\beta_1}(z, \partial D_{L_k, 2}^k) \leq u_k(z)
	\]
	on $B_{1/32}$ for a $\beta_1 > 1$ to be chosen. Set $y^k = (x'_k, g_k(x'_k)) \in \partial D^k_{L_k, 2}$, the projection of $x^k$ onto the graphical part of the boundary of $D^k_{L_k, 2}$ and $s_k = |x^k - y^k| \leq r_k$. We rescale with 
	\[
	\tilde{u}_k(x) := \frac{u_k(y^k + 2 s_k x)}{u_k(y^k + s_k e_n)}.
	\]
	Note that $y^k + s_k e_n = x^k$.
	
	Let us verify the differential inequalities satisfied by $\tilde{u}_k$: if $A_k = \frac{(2s_k)^p}{u_k^{p - 1}(y^k + s_k e_n)}$, then
	\[
		- A_k \leq \Delta_p \tilde{u}_k \leq A_k \epsilon_k
	\]
	on $\tilde{D}^k_{L_k, 1/s_k} = (D^k_{L_k, 2} - y^k)/s_k \cap B_{1/s_k}$. We claim that $A_k \rightarrow 0$. Indeed,
	\[
		A_k \leq \frac{(2s_k)^p}{c s_k^{\beta_1 (p - 1)}} \leq C r_k^{p - \beta_1 (p - 1)},
	\]
	which converges to $0$ so long as $\beta_1 < p/(p-1)$. 
	
	Next, fix any large $R$.  We have $\tilde{u}_k\geq 0$ and $\tilde{u}_k(e_n) = 1$ by construction. Applying Lemma \ref{l:upperbdplaplace} a finite number of times to $\tilde{u}_k$ on progressively larger balls which exhaust $\tilde{D}^k_{L_k, R}$, we obtain that
	\begin{equation}\label{e:upperbd}
		\tilde{u}_k(z) \leq C(R) d^{\beta_2}(z, \partial \tilde{D}_{L_k, 2}^k)
	\end{equation}
	for a fixed $\beta_2 < 1$. Meanwhile, on any $U = B_R \cap \{ x_n > \delta \}$, which lies entirely inside $\tilde{D}^k_{L_k, 1/s_k}$ for $k$ large, from standard interior $C^{1, \alpha}$ estimates we have that
	\[
		\|\tilde{u}_k\|_{C^{1, \alpha}(U)} \leq C.
	\]
	We may extract a subsequence along which $\tilde{u}_k$ converges in $C^{1, \alpha}(U)$ for every set $U$ to a limiting function $u \geq 0$ on $\R^n_+$. From \eqref{e:upperbd}, we have that $u$ is continuous up to $\{x_n = 0\}$ and vanishes along that set. The PDE passes to the limit to give that $\Delta_p u = 0$. We also have $u(e_n) = 1$, and
	\[
		 \frac{\nabla u_k(x^k)}{u_k(x^k)} = \nabla \tilde{u}_k(e_n) \rightarrow \nabla u(e_n),
	\]
	meaning that $|\nabla u(e_n)| \notin [\frac{1}{C}, C]$. Applying Lemma \ref{l:lious}, however, gives that $u(x) = x_n$ (using $u(e_n) = 1$ here), so this is impossible.
\end{proof}

\subsection{The approximation lemma}

\begin{lemma}\label{l:degiorgi} Let $u$ be an $H^1_0$ function on $D_{L, 1}$, $L \leq \frac{1}{10}$, which satisfies
	\[
		\int_{D_{L, 1}} A \nabla u \cdot \nabla \phi \leq \int_{D_{L, 1}} \phi
	\]
	for all nonnegative $\phi \in C^1_c(D_{L, 1})$, where $A$ is a measurable matrix-valued function with
	\[
		\lambda d^\epsilon(x, \partial D_{L, 1}\cap B_1) I \leq A \leq \lambda^{-1} d^{-\epsilon}(x, \partial D_{L, 1}\cap B_1) I.
	\]
	Then if $\epsilon < \epsilon_0$ small enough, we have that
	\[
		\sup_{D_{L, 1}} u \leq C(\lambda).
	\]
\end{lemma}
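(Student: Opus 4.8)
The plan is to run a weighted De Giorgi--Stampacchia iteration, treating the degeneracy as a Muckenhoupt $A_2$ weight. Since the maximum of $u$ with $0$ is again a weak subsolution of the same inequality and still lies in $H^1_0(D_{L,1})$, we may assume $u\ge 0$ and estimate $\sup u$. Write $d(x)=d(x,\partial D_{L,1}\cap B_1)$. Flattening the graph by the bi-Lipschitz map $(x',x_n)\mapsto(x',x_n-g(x'))$, whose constants are controlled by $L\le\tfrac1{10}$, shows that $d$ is comparable to the distance to a hyperplane; hence $d^s$ is an $A_2$ weight with $[d^s]_{A_2}$ bounded uniformly in $L$ for each fixed $|s|<1$, and $\int_{D_{L,1}}d^{-s}\,dx<\infty$ for $s<1$. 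We will use the weighted Poincar\'e inequality $\int d^\epsilon v^2\le C\int d^\epsilon|\nabla v|^2$ on $H^1_0(D_{L,1})$ and the Fabes--Kenig--Serapioni weighted Sobolev inequality: there is $\kappa=\kappa(n)>1$, uniform once $|\epsilon|\le\tfrac12$, with $\big(\int d^\epsilon|v|^{2\kappa}\big)^{1/\kappa}\le C\int d^\epsilon|\nabla v|^2$ for $v\in H^1_0(B_r)$, $r\le1$.

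The first step is a Caccioppoli inequality. Testing with $\phi=\eta^2(u-k)_+$ for $k\ge0$ and a cutoff $\eta$, using the lower bound $A\ge\lambda d^\epsilon I$ on the term $\int\eta^2 A\nabla(u-k)_+\cdot\nabla(u-k)_+$ and the upper bound $A\le\lambda^{-1}d^{-\epsilon}I$ together with Young's inequality on the cross term, one reaches an estimate of the form
\[
\int d^\epsilon\eta^2|\nabla(u-k)_+|^2\;\le\;C(\lambda)\int (u-k)_+^2\,d^{-3\epsilon}\big(|\nabla\eta|^2+\eta^2\big)\;+\;C\,\big|\{u>k\}\cap\operatorname{supp}\eta\big|.
\]
The essential difficulty is already visible: the energy on the left carries the \emph{degenerate} weight $d^\epsilon$, while the lower-order term on the right carries the \emph{singular} weight $d^{-3\epsilon}$ — the exponent $3$ being the price of splitting $d^{-\epsilon}=d^{\epsilon/2}\,d^{-3\epsilon/2}$ in Young's inequality. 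Because $A$ is only pinched between two different weights, not comparable to a single one, this mismatch cannot be removed, and closing the iteration must exploit that $\epsilon$ is small.

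To that end, apply the weighted Sobolev inequality to $\eta(u-k)_+$, bound $\int d^\epsilon|\nabla(\eta(u-k)_+)|^2$ via the Caccioppoli estimate, and convert the right-hand side — which involves the weight $d^{-3\epsilon}$ and the bare measure of the superlevel set — into $L^2(d^\epsilon)$ quantities by H\"older's inequality, at the cost of a finite factor $\big(\int_{D_{L,1}}d^{-s}\big)^{1/q}$ with $s$ a fixed multiple of $\epsilon$. This is exactly where $\epsilon<\epsilon_0$ enters: one needs $s<1$ for these negative powers to be integrable, and one needs the H\"older loss to be strictly dominated by the Sobolev gain $\kappa>1$, so that for $k_j=M(1-2^{-j})$ and a shrinking family of balls $B_{\rho_j}$ one gets a recursion $Y_{j+1}\le C\,b^{\,j}\,Y_j^{1+\delta}$ with $Y_j:=\int_{B_{\rho_j}}d^\epsilon(u-k_j)_+^2$, for $b>1$, $\delta>0$ depending only on $n,\lambda,\epsilon_0$. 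By the standard fast-geometric-convergence lemma $Y_\infty=0$ once $Y_0$ lies below a threshold; and testing with $\phi=u^+$, combined with the weighted Poincar\'e inequality and H\"older, gives the a priori bound $\int d^\epsilon(u^+)^2\le C(\lambda,n,\epsilon_0)$, so choosing $M=M(\lambda,n,\epsilon_0)$ large meets this threshold. Hence $u\le M$ a.e.; a covering of $D_{L,1}$ by balls of radius $\le\tfrac14$ (on which the local iteration was carried out) upgrades this to the stated global bound, using that $u\in H^1_0(D_{L,1})$ vanishes on all of $\partial D_{L,1}$.

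I expect the main obstacle to be the bookkeeping in the third step: one must simultaneously maintain $3\epsilon<1$ (integrability of $d^{-3\epsilon}$), keep $[d^{\pm s}]_{A_2}$ — and hence the Sobolev exponent $\kappa>1$ — controlled, and verify that the H\"older exponents chosen to trade $d^{-3\epsilon}$ and $|\{u>k\}|$ for $d^\epsilon$-weighted norms still leave a net superlinear power $1+\delta$ on $Y_j$. All three requirements hold for $\epsilon_0$ small depending only on $n$; the uniformity of $[d^s]_{A_2}$ in $L$, which is what the assumption $L\le\tfrac1{10}$ secures, is routine but should be recorded.
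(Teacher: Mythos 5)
Your plan is workable but takes a genuinely different, and considerably heavier, route than the paper. The paper exploits that $u\in H^1_0(D_{L,1})$ globally: it tests directly with the truncations $u_k=(u-l_k)_+$, which are admissible without any cutoff, so there is no cross term, no Caccioppoli inequality, and hence the singular weight $d^{-3\epsilon}$ you wrestle with never appears --- only the lower bound $A\ge\lambda d^\epsilon I$ enters the energy estimate $\lambda\int d^\epsilon|\nabla u_k|^2\le\int u_k$. The degenerate weight is then removed not by Muckenhoupt/Fabes--Kenig--Serapioni theory but by plain H\"older, $\int|\nabla u_k|^{2\alpha}\le\bigl(\int d^\epsilon|\nabla u_k|^2\bigr)^\alpha\bigl(\int d^{-\epsilon\alpha/(1-\alpha)}\bigr)^{1-\alpha}$, after which the \emph{ordinary} Sobolev embedding $W^{1,2\alpha}\hookrightarrow L^q$ with $q>2$ (for $\alpha<1$ close to $1$) yields $\int u_k^q\le C\bigl(\int u_k\bigr)^{q/2}$; the smallness of $\epsilon$ is used only to make $d^{-c\epsilon}$ integrable. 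The iteration is also organized differently: a first pass with levels $l_k=2^k$ gives the tail decay $\int(u-K)_+\le CK^{1-q}$, and a second pass with $l_k=K+1-2^{-k}$ closes the fast-convergence lemma. This replaces your ``choose $M$ large so that $Y_0$ is below threshold'' step, which as written needs one more line to be quantitative --- e.g. Chebyshev on $\{u>M/2\}$ combined with the Sobolev gain to show $Y_0=\int d^\epsilon(u-M/2)_+^2\le CM^{-2(1-1/\kappa)}$. Your approach should close, with the exponent bookkeeping you flag being the only delicate point, but it costs the $A_2$-weighted Sobolev and Poincar\'e machinery plus a covering argument, all of which the paper's global, cutoff-free argument avoids entirely.
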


\begin{proof}
	Let $u_k = (u - l_k)_+$, where $\{l_k\} $ is a strictly increasing sequence of real numbers. A straightforward approximation argument shows that $u_k$ may be used as test functions, i.e.
	\begin{equation} \label{e:degiorgi}
		\int \lambda d^\epsilon |\nabla u_k|^2 \leq \int A \nabla u \cdot \nabla u_k \leq \int u_k
	\end{equation}
	where $d(x) = d(x, \partial D_{L, 1}\cap B_1)$. Applying the H\"older inequality,
	\[
		\int |\nabla u_k|^{2 \alpha} = \int |\nabla u_k|^{2 \alpha} \frac{d^{\epsilon\alpha}}{d^{\epsilon\alpha}} \leq (\int d^{\epsilon} |\nabla u_k|^2)^\alpha (\int d^{\epsilon\alpha ( 1 - \alpha)}(x))^{\frac{1}{1 - \alpha}}.
	\]
	For any $\alpha < 1$, we may choose $\epsilon$ small enough that the rightmost factor is bounded. Now from the Sobolev embedding,
	\[
		\|u_k\|_{L^{\frac{2 \alpha n}{n - 2 \alpha}}} \leq C \|\nabla u_k\|_{L^{2\alpha}} \leq C (\int d^\epsilon|\nabla u_k|^2)^{\frac{1}{2}}.
	\]
	Choose $\alpha < 1$ so the exponent $q := \frac{2 \alpha n}{n - 2 \alpha} > 2$.
	 Applying  \eqref{e:degiorgi} to the right and raising to the $q$-th power,
	\[
		\int u_k^q \leq C (\int d^\epsilon|\nabla u_k|^2)^{q/2} \leq C (\int u_k)^{q/2}. 
	\]
	In particular, applying H\"older's inequality to the right and dividing gives
	\begin{equation}\label{e:degiorgi2}
		\int u_k^q \leq C.
	\end{equation}
		Alternatively, we can obtain the recursion formula
	\begin{equation}\label{e:degiorgi3}
		\int u_{k + 1} \leq \int_{u_k > l_{k+1} - l_{k}} u_k \leq \frac{1}{(l_k - l_{k-1})^{q - 1}}\int u_k^q \leq \frac{C}{(l_k - l_{k-1})^{q - 1}} (\int u_k)^{q/2}.
	\end{equation}

	Next, select $l_k = 2^k$: for any $K > 2$, choosing $m$ so that $2^{m} \leq K \leq 2^{m + 1}$ and then combining \eqref{e:degiorgi2} and \eqref{e:degiorgi3} gives
	\begin{equation}\label{e:degiorgi4}
		\int (u - K)_+ \leq \int ( u - 2^{m} )_+ = \int u_m \leq \frac{C}{2^{(m - 1)(q - 1)}} (\int u_{m - 1})^{q/2} \leq \frac{C}{K^{q - 1}}.
	\end{equation}
	
	Now we make a different selection of $l_k$: $l_k = K + 1 - 2^{-k}$ with $K > 2$ large. From \eqref{e:degiorgi3},
	\[
		\int u_{k+1} \leq C 2^{k(q - 1)} (\int u_k)^{q/2}.
	\]
	If $\int u_0 \leq \delta$ for some $\delta$ depending on $C$ and $q$ here, the sequence $\{ \int u_k \}_{k = 1}^\infty$ converges to $0$, which would give that $u \leq K + 1$. Using \eqref{e:degiorgi4}, though,
	\[
		\int u_0 = \int (u - K)_+ \leq \frac{C}{K^{q - 1}} \leq \delta
	\]
	if $K$ is chosen large enough in terms of $C$ and $q$. Thus for a large enough $K$, $u\leq K + 1$, which implies the conclusion.
\end{proof}

\begin{lemma} \label{l:plaplaceapprox}
	There exist constants $\eta, \epsilon, r$ small such that if $L \leq \eta$, $u \geq 0$ on $D_{L, 1}$, $u = 0$ on $\partial D_{L, 1} \cap B_1$, $u(e_1/2) = 1$, and $-A_0 \leq\Delta_p u \leq A_0 \epsilon$ on $D_{L, 1}$ for some $A_0 \leq 1$, the following holds: if $w$ satisfies
	\[
 	\begin{cases}
	 	\Delta_p w = 0 & \text{ on } D_{L, r} \\
	 	w = u & \text{ on } \partial D_{L, r},
 	\end{cases}
	\]
	then $| w - u|\leq C A_0$.
\end{lemma}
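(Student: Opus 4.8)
The plan is to follow the strategy announced in the section introduction: although $h:=u-w$ does not solve a $p$-Laplace equation, it solves a \emph{linear} equation with measurable coefficients whose ellipticity degenerates only like a small power of the distance to the boundary, so that Lemma~\ref{l:degiorgi} applies. Writing $\Delta_p u-\Delta_p w=\operatorname{div}\!\big(|\nabla u|^{p-2}\nabla u-|\nabla w|^{p-2}\nabla w\big)$ and interpolating along the segment $\xi_t:=(1-t)\nabla w+t\nabla u$,
\[
|\nabla u|^{p-2}\nabla u-|\nabla w|^{p-2}\nabla w=A(x)\,\nabla h,\qquad A(x):=\int_0^1\Big(|\xi_t|^{p-2}I+(p-2)|\xi_t|^{p-4}\,\xi_t\otimes\xi_t\Big)\,dt .
\]
Since $\Delta_p w=0$ and $w=u$ on $\partial D_{L,r}$, the function $h\in H^1_0(D_{L,r})$ is a weak solution of $\operatorname{div}(A\nabla h)=\Delta_p u$, with $|\Delta_p u|\le A_0$. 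The eigenvalues of the integrand are $|\xi_t|^{p-2}$ (with multiplicity $n-1$) and $(p-1)|\xi_t|^{p-2}$, so $A(x)$ is comparable, with constants depending only on $p$, to $\big(\int_0^1|\xi_t|^{p-2}\,dt\big)I$; and by the elementary inequality $\int_0^1|\xi_t|^{s}\,dt\simeq_s(|\nabla u|+|\nabla w|)^{s}$, valid for every $s>-1$ and in particular for $s=p-2$ since $p>1$, we conclude that $A(x)$ is comparable (constants depending on $p$) to $(|\nabla u(x)|+|\nabla w(x)|)^{p-2}I$.

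Everything therefore reduces to a two-sided gradient bound of the form $c\,d(x)^{\epsilon_1}\le|\nabla u(x)|+|\nabla w(x)|\le C\,d(x)^{-\epsilon_1}$ on $D_{L,r}$, with $d(x)=d(x,\partial D_{L,1}\cap B_1)$ and $\epsilon_1$ as small as we wish. For $u$: after a harmless rescaling to normalize it on $D_{L,2}$, Theorem~\ref{l:gradbddbelow} gives $|\nabla u|\simeq u/d$ near the origin, and combining this with Lemma~\ref{l:growthplaplace} and Lemma~\ref{l:upperbdplaplace} with exponents $\beta_1\in(1,2)$, $\beta_2\in(0,1)$ chosen close to $1$ yields $c\,d^{\beta_1}\le u\le C\,d^{\beta_2}$, hence $c\,d^{\beta_1-1}\le|\nabla u|\le C\,d^{\beta_2-1}$, so one may take $\epsilon_1=\max\{\beta_1-1,\,1-\beta_2\}$. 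For $w$: first normalize it, using that $-1\le\Delta_p w=0\le 1$ together with the normalization of $u$, so that Lemma~\ref{l:bdrybdd} bounds $u$ — hence $w$ — from above on $D_{L,r}$, while the lower growth bound for $u$ forces $u\ge c_0>0$ on the spherical part of $\partial D_{L,r}$, and comparison with a fixed positive barrier supported there gives $w(\tfrac r2 e_n)\ge c_1>0$. Then $w/w(\tfrac r2 e_n)$ is nonnegative, $p$-harmonic, vanishes on the graphical boundary, and equals $1$ at $\tfrac r2 e_n$, so (after rescaling $D_{L,r}$ to $D_{L,1}$, admissible since $r$ is a fixed constant) the same three results apply and give $c\,d^{\beta_1}\le w\le C\,d^{\beta_2}$ and $c\,d^{\beta_1-1}\le|\nabla w|\le C\,d^{\beta_2-1}$. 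Shrinking $r$ if necessary so that all of these estimates hold on $D_{L,r}$, the claimed bound on $|\nabla u|+|\nabla w|$ follows.

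Raising to the power $p-2$ — here one splits into the degenerate case $p\ge 2$ and the singular case $1<p<2$, but in both the extreme powers of $d$ that appear are $d^{\pm\epsilon}$ with $\epsilon=|p-2|\,\epsilon_1$ — we obtain
\[
\lambda\,d(x)^{\epsilon}\,I\ \le\ A(x)\ \le\ \lambda^{-1}d(x)^{-\epsilon}\,I\qquad\text{on }D_{L,r},
\]
with $\lambda=\lambda(n,p)$, and choosing $\beta_1,\beta_2$ close enough to $1$ makes $\epsilon<\epsilon_0$, the threshold in Lemma~\ref{l:degiorgi}. Finally, $\bar h:=h/A_0$ solves $\operatorname{div}(A\nabla\bar h)=\Delta_p u/A_0$ with right-hand side in $[-1,\epsilon]$, so $\int A\nabla\bar h\cdot\nabla\phi\le\int\phi$ for nonnegative $\phi\in C^1_c$; after rescaling $D_{L,r}$ to $D_{L,1}$ (again harmless, as $r$ is fixed) Lemma~\ref{l:degiorgi} gives $\sup\bar h\le C(n,p)$. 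Applying the same to $-\bar h$, which has right-hand side in $[-\epsilon,1]$, gives $\sup(-\bar h)\le C(n,p)$, hence $|w-u|=|h|\le C A_0$.

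The main obstacle is the two-sided gradient estimate in the second paragraph: the bounds $c\,d^{\epsilon_1}\le|\nabla u|\le C\,d^{-\epsilon_1}$ hinge on Theorem~\ref{l:gradbddbelow}, whose proof (a blow-up argument closed by the Liouville Lemma~\ref{l:lious}) is the technical core of this section; in addition, one must separately normalize the $p$-harmonic replacement $w$ by a comparison argument before those growth and gradient estimates can be invoked for it, and keep track of the several small radii so that all the ingredients coexist on the same domain $D_{L,r}$. Once the linearized equation is in place with its $d^{\pm\epsilon}$ ellipticity, the De Giorgi iteration of Lemma~\ref{l:degiorgi} is used as a black box, exactly as Lemma~\ref{l:remainder} is used in the fully nonlinear case.
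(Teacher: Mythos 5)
Your overall architecture coincides with the paper's: linearize $\Delta_p u-\Delta_p w$ along the segment $\xi_t$ to get $\operatorname{div}(A\nabla h)=\Delta_p u$ with $h=u-w\in H^1_0(D_{L,r})$, reduce the ellipticity of $A$ to two-sided power-of-distance bounds on the gradients, and close with Lemma~\ref{l:degiorgi} applied to $\pm h/A_0$. Your equivalence $\int_0^1|\xi_t|^{p-2}\,dt\simeq(|\nabla u|+|\nabla w|)^{p-2}$ for $p>1$ is exactly the content of the paper's case analysis (the ``quarter of the $t$-interval'' computation), and your treatment of $u$ (Lemma~\ref{l:growthplaplace}, Lemma~\ref{l:upperbdplaplace}, Theorem~\ref{l:gradbddbelow}) matches the paper's.

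The one step where you diverge, and where your argument has a defect, is the upper bound on $|\nabla w|$. You normalize $w$ by $w(\tfrac r2 e_n)$ (the barrier argument for $w(\tfrac r2 e_n)\ge c_1$ is fine) and then apply Lemma~\ref{l:growthplaplace}, Lemma~\ref{l:upperbdplaplace} and Theorem~\ref{l:gradbddbelow} to the rescaled $w$. But each of those results yields its conclusion only on a fixed \emph{fraction} of the rescaled domain ($D_{L,1/64}$, resp.\ $B_{r'}$), so after scaling back you control $|\nabla w|$ only on $D_{L,c_0r}$ for a universal $c_0<1$. ``Shrinking $r$'' cannot repair this: $w$ is defined relative to $D_{L,r}$, so the uncontrolled annular region $D_{L,r}\setminus D_{L,c_0r}$ — which contains points arbitrarily close to the graphical boundary — is always a fixed proportion of the domain, and there the ellipticity bounds for $A$, hence the hypotheses of Lemma~\ref{l:degiorgi}, are not verified. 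The paper avoids this by a pointwise argument: at each $x\in D_{L,r}$ it applies the rescaled interior $C^{1,\alpha}$ estimate for $p$-harmonic functions on the ball $B_{d(x)/2}(x)$ (which stays inside the domain near the graph) together with the upper growth bound $w\le f\le Cd^{\beta_1}$ obtained from the dominating solution of $\Delta_p f=-1$; this gives $|\nabla w(x)|\le Cd^{\beta_1-1}(x)$ at every relevant point. You should replace your Theorem~\ref{l:gradbddbelow}-for-$w$ step with that local estimate. Note also that the lower bound on $|\nabla w|$ you derive is not needed: for $p>2$ the lower bound on $A$ follows from the lower bound on $|\nabla u|$ alone (the segment $\xi_t$ has $|\xi_t|\ge\tfrac14|\nabla u|$ on a $t$-interval of length $\tfrac14$), and for $p<2$ it follows from the upper bounds on both gradients. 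With that substitution your proof agrees with the paper's.
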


\begin{proof}
	Set $d(x) = d(x, \partial D_{L, 1} \cap B_1)$ below. Let $f$ solve the following PDE:
	\[
	\begin{cases}
	\Delta_p f = -1 & \text{ on } D_{L, 1} \\
	f = u & \text{ on } \partial D_{L, 1}.
	\end{cases}	
	\]
	From the maximum principle, $f \geq u$ and $f \geq w$. In particular $C \geq f(e_1/2) \geq 1$, with the upper bound from the Harnack inequality, so applying Lemma \ref{l:upperbdplaplace} to $f/f(e_1/2)$ gives that
	\[
	u(x), w(x) \leq f(x) \leq C d^{\beta_1}(x)
	\]
	on $D_{L, 1/64}$ for $\beta_1 < 1 $ fixed.
	
	Apply Lemmas \ref{l:growthplaplace}  and \ref{l:gradbddbelow} to $u$ for $\beta_2 > 1$ fixed, choosing $\eta$ and $\epsilon$ so the assumptions are satisfied regardless of $A_0$. Set $r$ to the smaller of the $r$ in Lemma \ref{l:gradbddbelow} and $1/64$; then we have that
	\[
	c d^{\beta_2}(x) \leq  u(x)
	\]
	and
	\[
	|\nabla u(x)| \approx \frac{u(x)}{d(x)},
	\]
	so
	\[
	c d^{\beta_2 - 1}(x) \leq |\nabla u(x)| \leq C d^{\beta_1 - 1}(x)
	\]
	for $x \in D_{L, r}$. Now, take any $x \in D_{L, r}$ and $B_{d(x)/2}(x)$: on this ball, we may apply either the boundary or interior form of the $C^{1, \alpha}$ estimate for $p$-harmonic functions \cite{gt01} to give that
	\[
	|\nabla w(x)|\leq C \frac{\max\{  d^{\frac{p}{p - 1} }(x), \sup_{B_{d(x)/2} } w  \}}{d(x)} \leq C d^{\beta_1 - 1}(x).
	\]
	Next, set
	\[
		a(x) = \int_0^1 |\nabla u(x) t + \nabla w(x) (1 - t)|^{p - 2} dt.
	\]
	The quantities $\nabla u, \nabla w$ are locally bounded on the set $D_{L, r}$, so when $ p \geq 2$ this is well-defined on this region. When $p < 2$, note that $\nabla u \neq 0$, and so the integrand is an integrable function regardless of the value of $\nabla w$, meaning $a$ is still well-defined. In a similar vein, we estimate $a$ from above and below. If $p \geq 2$, then
	\[
		a(x) \leq C[|\nabla u(x)|^{p - 2} + |\nabla w(x)|^{p - 2} ] \leq C d^{(\beta_1 - 1)(p - 2)} \leq C d^{- \alpha}
	\]
	so long as $\beta_1$ is chosen large enough relative to $\alpha$, which will be determined below. When $p < 2$, the same computation instead gives
	\[
		a(x) \geq [|\nabla u(x)| + |\nabla w(x)| ]^{p - 2} \geq c d^{(\beta_1 - 1)(p - 2)} \geq c d^{\alpha}.
	\]

	On the other hand, we have 
	\[
		|\nabla u(x) t + \nabla w(x) (1 - t)| \geq t |\nabla u| - (1 - t)|\nabla w| \geq \frac{1}{4} |\nabla u|
	\] for $ t\geq \frac{3}{4}$ if $|\nabla w| \leq |\nabla u|$. If instead $|\nabla w|\geq |\nabla u|$, we get
	\[
		|\nabla u(x) t + \nabla w(x) (1 - t)| \geq (1 - t) |\nabla w| - t|\nabla u| \geq \frac{1}{4} |\nabla w| \geq \frac{1}{4} |\nabla u|
	\]
	for $t < \frac{1}{4}$. In either case this holds on an interval of length $\frac{1}{4}$, so if $p > 2$,
	\[
		a(x) \geq c |\nabla u|^{p - 2} \geq c d^{(\beta_2 - 1)(p - 2)}(x) \geq c d^{\alpha}(x)
	\]
	if $\beta_2$ is small enough. Finally, for $p < 2$ one may check that
	\[
		\int_0^1 |\nabla u(x) t + \nabla w(x) (1 - t)|^{p - 2}  dt \leq C |\nabla u|^{p - 2} \leq C d^{(\beta_2 - 1)(p - 2)} \leq C d^{-\alpha}
	\]
	by directly computing the integral. To summarize, we have shown that
	\[
		c d^{\alpha} \leq a(x) \leq C d^{ -\alpha}.
	\]

	Consider now the matrix
	\[
		a_{ij}(x) = \int_0^1 |\nabla u(x) t + \nabla w(x) (1 - t)|^{p - 2} m_{ij}^t  dt.
	\]
	where
	\[
		m_{ij}^t = \delta_{ij} + (p - 2) \frac{(u_i t + w_i (1 - t))(u_j t + w_j (1 - t))}{|\nabla u(x) t + \nabla w(x) (1 - t)|^2}.
	\]
	For any fixed $t$ and $\xi \in \R^n$, the factor $m_{ij}^t$ has $\lambda |\xi|^2 \leq m_{ij}^t \xi_i\xi_j \leq \lambda^{-1} |\xi|^2$, with constant $\lambda$ depending only on $p$. Using this,
	\[
		a_{ij}(x) \xi_i \xi_j = \int_0^1 |\nabla u(x) t + \nabla w(x) (1 - t)|^{p - 2} m_{ij}^t \xi_i \xi_j dt \geq a(x) \lambda |\xi|^2,
	\]
	and similarly $a_{ij}(x) \leq \lambda^{-1} |\xi|^2 a(x)$.
	
	The point of this $a_{ij}$ is that, if $F(z) = |z|^{p - 2} z$,
	\[
		F_i(\nabla u) - F_i(\nabla w) = \int_0^1  \partial_t F(\nabla u(x) t + \nabla w(x) (1 - t))  dt = a_{ij}(x) (u_j - w_j).
	\]
	Setting $h = u - v$, we have shown that
	\[
		\Delta_p u -\Delta_p w = \text{div} [F(\nabla u)  - F(\nabla w)] = \partial_i (a_{ij}  h_j)
	\]
	on $D_{L, r}$ (in the distributional sense). In particular,
	\[
		-A_0\leq \partial_i (a_{ij}  h_j) \leq A_0
	\]
	from the equations on $u$ and $w$. Apply Lemma \ref{l:degiorgi} to $\pm \frac{h(r \cdot)}{A_0}$, using our bounds on $a_{ij}$ and choosing $\alpha$ small enough, to get that
	\[
		|h| \leq C A_0
	\]
	on $D_{L, 1}$. This completes the argument.
\end{proof}

We may reformulate this approximation lemma in a more helpful way:

\begin{lemma}\label{l:plaplaceapproxscaled}
	For every $\alpha > 0$, there exist constants $\eta, \epsilon, r_0$ small such that if $L \leq \eta$, $u \geq 0$ on $D_{L, 1}$, $u = 0$ on $\partial D_{L, 1} \cap B_1$, $u(e_1/2) = 1$, and $-1 \leq\Delta_p u \leq \epsilon$ on $D_{L, 1}$, the following holds: if $w$ satisfies
	\[
	\begin{cases}
	\Delta_p w = 0 & \text{ on } D_{L, r} \\
	w = u & \text{ on } \partial D_{L, r},
	\end{cases}
	\]
	with $ r \leq r_0$, then $|w - u|\leq C r^{2 - \alpha}$.
\end{lemma}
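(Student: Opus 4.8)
The plan is to reduce everything to Lemma~\ref{l:plaplaceapprox} by a rescaling. Since that lemma already contains all of the PDE content, the only thing left to do is to track how the power $r^{2-\alpha}$ emerges from the non-linear scaling of $\Delta_p$ together with the two-sided growth estimates of Lemmas~\ref{l:growthplaplace} and~\ref{l:upperbdplaplace}.

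Let $r_1$ denote the fixed small internal radius produced by Lemma~\ref{l:plaplaceapprox}, and put $\rho = r/r_1$; the constants $\eta,\epsilon,r_0$ will be shrunk as needed along the way. Given $u,w$ as in the statement with $r \le r_0$, I would set
\[
\tilde u(x) = \frac{u(\rho x)}{u(\rho e_n/2)}, \qquad \tilde w(x) = \frac{w(\rho x)}{u(\rho e_n/2)}
\]
on $D_{L,1}$. The rescaled domain $(D_{L,r})/\rho$ is a Lipschitz domain $D_{L,r_1}$ with the same constant $L \le \eta$, so $\tilde w$ is $p$-harmonic on $D_{L,r_1}$ with $\tilde w = \tilde u$ on $\partial D_{L,r_1}$, while $\tilde u \ge 0$ vanishes on the graphical boundary and $\tilde u(e_n/2) = 1$. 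From the scaling identity $\Delta_p \tilde u(x) = \frac{\rho^p}{u^{p-1}(\rho e_n/2)}(\Delta_p u)(\rho x)$ one obtains $-A_0 \le \Delta_p \tilde u \le A_0\epsilon$ with $A_0 := \rho^p/u^{p-1}(\rho e_n/2)$.

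Next I would control $u(\rho e_n/2)$ from both sides. Applying Lemma~\ref{l:growthplaplace} and Lemma~\ref{l:upperbdplaplace} directly to $u$ (legitimate once $\eta,\epsilon$ are small), with exponents $\beta_2 \in (1,2)$ and $\beta_1 \in (0,1)$ still to be fixed, and using $d(\rho e_n/2, \partial D_{L,1}) \approx \rho$ for small $L$, gives $c\,\rho^{\beta_2} \le u(\rho e_n/2) \le C\,\rho^{\beta_1}$. Choosing $\beta_2 < p/(p-1)$ forces $A_0 \le C\,\rho^{\,p-\beta_2(p-1)} \to 0$, so for $r_0$ small one has $A_0 \le 1$, and Lemma~\ref{l:plaplaceapprox} applies to the pair $(\tilde u,\tilde w)$, giving $|\tilde w - \tilde u| \le C A_0$ on $D_{L,r_1}$. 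Undoing the scaling, and noting $D_{L,\rho r_1} = D_{L,r}$, this reads
\[
|w - u| \le C\,u(\rho e_n/2)\,A_0 = C\,\frac{\rho^p}{u^{p-2}(\rho e_n/2)} \qquad \text{on } D_{L,r}.
\]

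It remains to extract the exponent $2-\alpha$, and this is the only delicate point: because the $p$-Laplacian rescales by the factor $\rho^p/\mu^{p-1}$ rather than linearly in $u$, the normalization $\mu = u(\rho e_n/2)$ reappears with exponent $p-2$, whose sign decides which growth bound must be fed in. When $p \ge 2$ I would use the lower bound, $\mu^{p-2} \ge c\,\rho^{\beta_2(p-2)}$, so that $|w-u| \le C\rho^{\,p-\beta_2(p-2)}$, and then take $\beta_2 > 1$ close enough to $1$ (still below $\min$ of $2$ and $p/(p-1)$) that $p-\beta_2(p-2) \ge 2-\alpha$; for $p=2$ the factor disappears and the bound is simply $C\rho^2$. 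When $p < 2$ I would instead use the upper bound, $\mu^{2-p} \le C\rho^{\beta_1(2-p)}$, so that $|w-u| \le C\rho^{\,p+\beta_1(2-p)}$, and take $\beta_1 < 1$ close enough to $1$ that $p+\beta_1(2-p) \ge 2-\alpha$. Since $\rho = r/r_1$ with $r_1$ a fixed constant, $\rho^{2-\alpha} = C\,r^{2-\alpha}$, which is the asserted estimate. I do not expect a genuine obstacle here: the substantive work sits in Lemma~\ref{l:plaplaceapprox}, and what remains is the careful bookkeeping of these exponents together with the dichotomy between $p \ge 2$ and $p < 2$.
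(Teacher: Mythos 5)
Your proposal is correct and follows essentially the same route as the paper: rescale by $s=r/r_1$ with $r_1$ the radius from Lemma~\ref{l:plaplaceapprox}, normalize by $u(se_n/2)$, verify $A_0\le 1$ from the lower growth bound with $\beta_2<p/(p-1)$, apply Lemma~\ref{l:plaplaceapprox}, and scale back using the growth estimates to convert $s^p/u^{p-2}(se_n/2)$ into $Cr^{2-\alpha}$. Your explicit dichotomy between $p\ge 2$ (lower bound from Lemma~\ref{l:growthplaplace}) and $p<2$ (upper bound from Lemma~\ref{l:upperbdplaplace}) in the final exponent bookkeeping is in fact slightly more careful than the paper's terser treatment of that step.
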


\begin{proof}
	First, apply Lemma \ref{l:growthplaplace} to $u$ to obtain that $c d^{\beta}(x) \leq  u(x)$ for a $\beta$ to be determined shortly on $D_{L, r_0}$. Set
	\[
		u_1(y) = \frac{u(s y)}{u(s e_n/2)},
	\]
	where $s = \frac{r}{r_1}$, where we set $r_1$ to be the $r$ in Lemma \ref{l:plaplaceapprox}'s conclusion and ask that $r_0 \leq r_1^2$. Set $w_1(y) = \frac{w(s y)}{u(s e_n/2)}$.
	Let us check the equation satisfied by $u_1$ on $D'_{L, 1}$ (the rescaled domain):
	\[
		\Delta_p u_1(y) = \frac{s^p}{u^{p - 1}(s e_n/2)}\Delta u(y/s) := A\Delta u(y/s).
	\]
	We wish to arrange to have $A_0 \leq 1$. This may be done, as
	\[
		u^{p - 1}(s e_n/2) \geq c s^{(p - 1) \beta} \geq s^{p - \frac{1}{2}},
	\]
	where we choose $\beta$ sufficiently close to $1$, and then $r_0$ small enough so as to have $s \leq r_0/r_1$ absorb the constant.
	Apply Lemma \ref{l:plaplaceapprox} to deduce that
	\[
		|u_1 - w_1|\leq C A_0,
	\]
	which scales back to
	\[
		|u - w|\leq C A_0 u(s e_n/2) \leq C \frac{s^p}{u^{p - 2}(s e_n/2)}.
	\]
	As before, we may estimate
	\[
		u^{p - 2}(s e_n/2) \geq c s^{(p - 2) \beta} \geq c s^{p - 2 + \alpha},
	\]
	by choosing $\beta$ close to $1$, so that
	\[
		|u - w| \leq C s^{2 - \alpha}  \leq C r^{2 - \alpha}.
	\]
\end{proof}

\begin{proof}[Proof of Theorem \ref{t:flpmain}.]
	We apply Theorem \ref{t:meta} with $H$ the solution mapping for the $p$-Laplacian, $U$ our Lipschitz graph domain $D_{L, 1}$, $U_{x, r} = U \cap B_r(x)$, and $V$ the set of all $u$ with $u > 0$ on $U$, $u = 0$ on $\partial U$, $u(e_n/2) = 1$, and $-1 \leq \Delta_p u \leq \epsilon$ for $\epsilon$ small. Then all of the properties (P1-5) and (P7-8) follow in a standard way. For property (P6), we apply Lemma \ref{l:plaplaceapproxscaled} to $u \in V$ to see that at least it is valid when centered at $x = 0$ and $r < r_0$. For $ r \geq 0$, the property is automatic from the bound in Lemma \ref{l:bdrybdd} instead. For other $x \in \partial U \cap B_{1/2}$, it then follows from a simple translation argument.
	
	Lemma \ref{l:growthplaplace} ensures that the growth assumptions on $u, v$ hold, so we may apply Theorem \ref{t:meta} to $u, v\in V$. The rest follows as in the proof of Theorem \ref{t:nta} or \ref{t:flmain}.
\end{proof}
  
\section*{Acknowledgments}

HS was supported by Swedish Research Council.

\bibliographystyle{plain}
\bibliography{refnonlinear}

\end{document}